\newtheorem{thm}{Theorem}[section]
\theoremstyle{definition}
\newtheorem{rem}{Remark}[section]
\newtheorem{example}{Example}
\numberwithin{equation}{section}
\title{\bf Mathematical design of a novel input/instruction device using a moving emitter}
\author{Yukun  Guo\thanks{Department of Mathematics, Harbin Institute of Technology, Harbin, P. R. China. Email: {\tt ykguo@hit.edu.cn}},
\and Jingzhi Li\thanks{Department of Mathematics, Southern University of Science and Technology, Shenzhen, P. R. China. Email: {\tt li.jz@sustc.edu.cn}},
\and Hongyu Liu\thanks{Department of Mathematics, Hong Kong Baptist University, Kowloon, Hong Kong SAR, P. R. China.  Email:  {\tt hongyuliu@hkbu.edu.hk}}
\and Xianchao Wang\thanks{Department of Mathematics, Harbin Institute of Technology, Harbin, P. R. China. Email: {\tt xcwang90@gmail.com}}}
\date{} 
\begin{document}
\maketitle

\begin{abstract}

This paper is concerned with the mathematical design of a novel input/instruction device using a moving emitter. The emitter generates a point source and can be installed on a digit pen or worn on the finger of the human being who wants to interact/communicate with the computer. The input/instruction can be recognized by identifying the motion trajectory of the emitter performed by the human being from the collected wave field data. The identification process is modelled as an inverse source problem where one intends to identify the trajectory of a moving point source. There are several salient features of our study which distinguish our result from the existing ones in the literature. First, the point source is moving in an inhomogeneous background medium, which models the human body. Second, the dynamical wave field data are collected in a limited aperture. Third, the reconstruction method is independent of the background medium, and it is totally direct without any matrix inversion. Hence, it is efficient and robust with respect to the measurement noise. Both theoretical justifications and computational experiments are presented to verify our novel findings.

\medskip

\medskip

\noindent{\bf Keywords:}~~Input/instruction device, wave propagation, inverse source problem, moving source, trajectory identification

\noindent{\bf 2010 Mathematics Subject Classification:}~~35R30, 35P25, 78A46

\end{abstract}

\section{Introduction}\label{sect:1}

\subsection{Background and motivation}

We are concerned with input/instruction technologies to interact and communication with the computer. Generally, there are three major ingredients for a input/instruction technology: the computing machine, the input/instruction device and the human being who performs the specific inputs/instructions to the computer. The input/instruction device bridges the computer and the human being. Nowadays, the majority of input/instruction technology is based on {\it text user interfaces} (TUIs) or {\it graphical user interfaces} (GUIs), using devices such as keyboards, mouse or touch-screens. There are some emerging input/instruction technologies including the gesture recognition. The gesture recognition enable humans to communicate with the machine and interact naturally without any mechanical devices. Many approaches have been proposed, e.g., using cameras to capture the human body gesture and using computer vision algorithms to interpret the body language; see \cite{Wiki} for  relevant discussions. In \cite{LWY}, a novel gesture recognition approach was proposed and investigated by adopting  techniques from the inverse wave scattering theory. In this paper, we propose and investigate an alternative and novel input/instruction technology using  the inverse source reconstruction techniques; see Figure~\ref{fig.1} for a schematic illustration.
\begin{figure}[t]
\centering
  \setlength{\unitlength}{0.05 mm}%
  \begin{picture}(1380.7, 819.5)(0,0)
  \put(0,0){\includegraphics{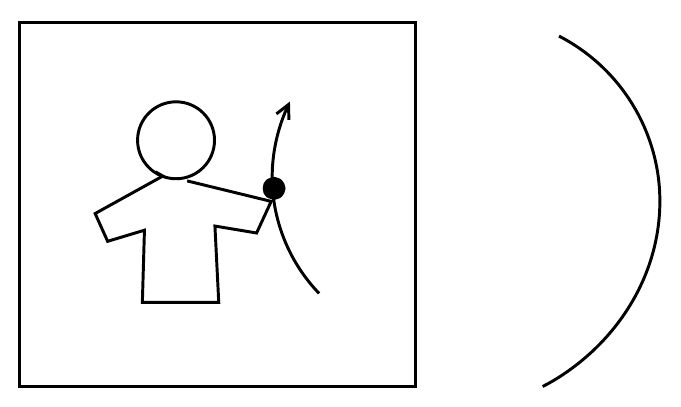}}
  \put(332.72,297.04){\fontsize{14.23}{17.07}\selectfont \makebox(50.0, 100.0)[l]{$\Omega$\strut}}
  \put(602.61,438.21){\fontsize{14.23}{17.07}\selectfont \makebox(50.0, 100.0)[l]{$z_0(t)$\strut}}
  \put(720.94,622.98){\fontsize{14.23}{17.07}\selectfont \makebox(50.0, 100.0)[l]{$D$\strut}}
  \put(1200.51,402.92){\fontsize{14.23}{17.07}\selectfont \makebox(50.0, 100.0)[l]{$\Gamma$\strut}}
  \end{picture}%
  \caption{Schematic illustration of the proposed input/instruction technology using a moving emitter.}\label{fig.1}
\end{figure}

In the proposed technology, in order to give the specific input or instruction to the computer, the person uses an active emitter and moves it following a certain path. The emitter generates a moving point source and there is a sensor monitoring the wave field generated by the emitter away from it on a surface $\Gamma$ in a timely manner. The sensor recognizes the input/instruction via identifying the motion trajectory of the point wave. Using the concept of such a trajectory identifying technology, it is possible to input texts into the computer by writing in the air as what has been used for a touch-screen by tapping on the screen. For the practical design, the emitter could be a ring wearing on the finger or a digital pen, and the sensor could be installed on the computer. Mathematically, the trajectory identification process can be modelled as an inverse source problem, where one intends to identify the  trajectory of a moving point-source by measuring the corresponding wave field. We would like to emphasize that the proposed technology is novel and different from the gesture-recognition one proposed  in \cite{LWY}. In \cite{LWY}, the sensor identifies the body language by sending a point wave to interact with the human body and then collecting the reflected wave data of the passive scatter, and mathematically it is modelled as an inverse medium scattering problem.

\subsection{Mathematical modeling}

We present the mathematical formulation for the proposed input/instruction technology. Let the emitter generate a time-dependent point scalar wave of the following form
\begin{equation}\label{eq:point wave}
F_{\omega_0}(x,t; z_0):=\lambda(t) \delta(x-z_0), \quad (x, t)\in\mathbb{R}^3\times\mathbb{R}_+,
\end{equation}
where $x$ and $t$ denote, respectively, the spatial and temporal variables. In \eqref{eq:point wave}, $\delta$ is the Dirac delta distribution in space, $z_0\in\mathbb{R}^3$ signifies the position of the point source and $\lambda(t): \mathbb{R}\rightarrow \mathbb{R}$ is a casual function satisfying $\lambda(t)=0$ for $t<0$. Throughout, we take
\begin{equation}\label{eq:tthhh}
\lambda(t):=\sin(\omega_0 t)\quad \mbox{for}\ t>0,
\end{equation}
to be a time-harmonic signal, where $\omega_0\in\mathbb{R}_+$ denotes the frequency of the point wave. It is remarked that the frequency $\omega_0$ is critical in our study and shall be properly chosen in what follows. The use of the time-harmonic wave with a single frequency is important from a practical viewpoint. Indeed, it enables us to distinguish the identification signals from the various possible background noise. Assume that the wave speed in the homogeneous background space is $c_0\in\mathbb{R}_+$. Let $\Omega$ and $D$ be two bounded domains in $\mathbb{R}^3$ such that $\Omega\subset D$. Here $\Omega$ models the human body, who performs the input/instruction using the emitter, whereas $D$ models the region containing the motion of the emitter. Let $c(x)$, $x\in\Omega$, signify the wave speed in the human body, and in what follows, we extend $c$ to the whole space by setting $c(x)=c_0$ for $x\in\mathbb{R}^3\backslash\Omega$. Let $u(x, t)$ denote the wave field in the space, which satisfies the following PDE system  for $u\in H^1(\mathbb{R}_+, H_{loc}^2(\mathbb{R}^3))$,
\begin{equation}\label{eq:we1}
\begin{cases}
&\displaystyle{ \frac{1}{c^2(x)} \frac{\partial^2 u}{\partial t^2}(x, t)-\Delta u(x, t)=\lambda(t) \delta(x-z_0)},\quad (x, t)\in\mathbb{R}^3\times\mathbb{R}_+,\medskip\\
& u(x, 0)=\partial_t u(x, 0)=0.
\end{cases}
\end{equation}
If the point source is moving, one should replace $z_0$ in \eqref{eq:point wave} by $z_0(t): [0, T]\rightarrow \mathbb{R}^3$. $z_0(t)$ signifies the trajectory of the moving point source and $T$ represents the terminal time of the motion. It is assumed that  $z_0(t)\in C^1[0,T]$. Throughout the rest of our study, we also assume that the emitter stops emitting the  wave after the terminal time $T$. Let $\Gamma$ be an open surface in $\mathbb{R}^3$ outside $D$, which denotes the measurement surface. Define
\begin{equation}\label{eq:ms1}
\Lambda_{\omega_0}(z_0; T)=u(x, t)\big|_{(x, t)\in \Gamma\times [0, T]}.
\end{equation}
The inverse problem that we are concerned with is to recover $z_0(t)$ by a knowledge of the measurement operator $\Lambda_{\omega_0}(z_0; T)$, namely,
\begin{equation}\label{eq:ip1}
\Lambda_{\omega_0}(z_0; T)\longrightarrow z_0([0,T]).
\end{equation}

We would like to emphasize that the reconstruction of the trajectory in \eqref{eq:ip1} should be independent of the inhomogeneity of the background wave speed, namely $c$. This is associated with the scenario that the input/instruction recognition should be independent of the human performing that input/instruction. The strategy that we shall develop in what follows to tackle this challenging issue is to properly choose the frequency $\omega_0$ such that the wave scattering generated from the inhomogeneous medium is nearly negligible compared to that generated from the point wave. Furthermore, we shall develop a direct reconstruction algorithm without any matrix inversion, so that the method is robust with respect to the measurement noise. In doing so, one can disregard the wave scattering from the inhomogeneity, treating it as the measurement noise. Another salient feature of our study is that the measurement data used for the recovery. For the practical consideration, we only make use of the measurement of the wave field in a limited aperture. Those features distinguish our present study from the existing ones in the literature.

We are aware of the work \cite{24Naka} on recovering the motion of a moving point wave, which is much related to the present study. However, the method therein may not fit in the input/instruction framework considered in the current article. On the one hand, the authors of \cite{24Naka} only considered the reconstruction for point source located in a uniform and homogeneous background space. On the other hand, the measurement data in \cite{24Naka} are observed at a single point but it requires information of $u$, $\partial_t u$, $\partial_j u$, $\partial_t^2 u$, $\partial_t\partial_j u$, and $\partial_i\partial_j u\,\,(i,j=1,2,3)$, where $\partial_j u=\partial u/\partial x^j$ for $x=(x^j)_{j=1}^3$. Furthermore, it is required that $u$ is always non-vanishing at the observation point. Finally, we would like to refer to \cite{1Mon,2BaoG,4ArridgeSR,Bao,ClaKli,10Fokas,12Isakov,Isa2,Kli1,LU,Yam2,SU} and the references therein for  relevant studies and discussions on practical applications on the inverse source problems.

\section{Mathematical Analysis and Numerical Recovery}

In this section, we develop the recovery method for the inverse problem \eqref{eq:ip1} associated with the novel input/instruction approach. We first study the influence of the background inhomogeneity $(\Omega; c)$, and then we propose and rigorously justify the reconstruction scheme.

\subsection{Wave scattering from the background inhomogeneity}

We introduce the following PDE system for $u_0(x, t)\in H^1(\mathbb{R}_+, H_{loc}^2(\mathbb{R}^3))$,
\begin{equation}\label{eq:we2}
\begin{cases}
&\displaystyle{\frac{1}{c_0^2} \frac{\partial^2 u_0}{\partial t^2}(x, t)-\Delta u_0(x, t)=\lambda(t) \delta(x-z_0)},\quad (x, t)\in\mathbb{R}^3\times\mathbb{R}_+,\medskip\\
& u_0(x, 0)=\partial_t u_0 (x, 0)=0.
\end{cases}
\end{equation}
\eqref{eq:we2} describes the wave propagation when the point source is located in the homogeneous space with the background wave speed to be $c_0$. Then, we have

\begin{thm}\label{thm:main1}
Let $u$ and $u_0$ be, respectively, the solutions to \eqref{eq:we1} and \eqref{eq:we2}, with $z_0\in D$ being any fixed point in $D$. Suppose that $\omega_0\in\mathbb{R}_+$ is chosen such that
\begin{equation}\label{eq:cond1}
\omega_0^2\|c_0^{-2}-c^{-2}\|_{L^\infty(\mathbb{R}^3)}\leq \varepsilon\ll 1.
\end{equation}
Then there holds
\begin{equation}\label{eq:cond2}
\|u(x, t)-u_0(x, t)\|_{L^\infty(\Gamma\times [0, T])}\leq C \varepsilon,
\end{equation}
where $C$ is a positive constant depending only on $\|c\|_{L^\infty(\Omega)}, c_0, \omega_0$ and $D, \Gamma$.
\end{thm}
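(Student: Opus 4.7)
The plan is to set $w:=u-u_0$ and show that $w$ is $O(\varepsilon)$ uniformly on $\Gamma\times[0,T]$. Subtracting \eqref{eq:we2} from \eqref{eq:we1}, $w$ has vanishing Cauchy data and satisfies
\begin{equation*}
\frac{1}{c^{2}(x)}\,\partial_t^2 w-\Delta w \;=\; f,\qquad f:=\Bigl(\frac{1}{c_0^{2}}-\frac{1}{c^{2}(x)}\Bigr)\partial_t^2 u_0,
\end{equation*}
and since $c\equiv c_0$ outside $\Omega$, the source $f$ is supported in $\overline{\Omega}\times[0,T]$. The strategy is to push the hypothesis \eqref{eq:cond1} through this source term, get an energy-type control on $w$, and then upgrade it to a pointwise bound on the well-separated observation surface $\Gamma$.

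To bound $f$ I would use the explicit 3D retarded potential
\begin{equation*}
u_0(x,t)=\frac{\lambda\bigl(t-|x-z_0|/c_0\bigr)}{4\pi|x-z_0|}\,H\!\bigl(t-|x-z_0|/c_0\bigr),
\end{equation*}
where $H$ is the Heaviside function. Because $\lambda(t)=\sin(\omega_0 t)$ satisfies $\lambda''=-\omega_0^{2}\lambda$, this yields the pointwise bound $|\partial_t^2 u_0(x,t)|\le \omega_0^{2}/(4\pi|x-z_0|)$ for $x\neq z_0$. Combined with \eqref{eq:cond1} I obtain $|f(x,t)|\le \varepsilon/(4\pi|x-z_0|)$ on $\Omega\times[0,T]$. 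Since $|x-z_0|^{-1}$ is square-integrable on any bounded subset of $\mathbb{R}^3$ and $D$ is bounded, this gives $\sup_{t\in[0,T]}\|f(\cdot,t)\|_{L^{2}(\Omega)}\le C\varepsilon$ uniformly for $z_0\in D$. A standard energy estimate for the variable-coefficient wave equation with zero initial data then yields
\begin{equation*}
\|\partial_t w(\cdot,t)\|_{L^{2}(\mathbb{R}^{3})}+\|\nabla w(\cdot,t)\|_{L^{2}(\mathbb{R}^{3})}\le C\varepsilon \quad \text{for all }t\in[0,T].
\end{equation*}

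The main obstacle is upgrading this $H^{1}$-type bound to the pointwise bound \eqref{eq:cond2} on $\Gamma$. The key is to exploit $\operatorname{dist}(\Gamma,\overline{D})>0$: on a neighborhood $U$ of $\Gamma$ the source $f$ vanishes and $c\equiv c_0$, so $w$ is a classical solution of the free constant-coefficient wave equation there. Differentiating the original equation in $t$ and $x$, each derivative of $f$ is still controlled by $\varepsilon$ in $L^{2}(\Omega)$ (the additional powers of $\omega_0$ coming from derivatives of $\lambda$ are absorbed into the $\omega_0$-dependent constant $C$). Iterating the energy estimate up to $H^{s}$ with $s>3/2$ and invoking the Sobolev embedding $H^{s}(U)\hookrightarrow L^{\infty}(U)$ delivers \eqref{eq:cond2}. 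An equivalent alternative is to represent $w|_{U}$ by Kirchhoff's formula applied to its Cauchy data on $\partial D$ — which are controlled in $L^2$ by the energy bound via the trace theorem — using the uniform boundedness of the kernel $|x-y|^{-1}$ for $x\in\Gamma$ and $y\in\partial D$ to convert the $L^{2}$ bound on the data into an $L^{\infty}$ bound on $\Gamma$.
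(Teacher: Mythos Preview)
Your approach is genuinely different from the paper's and, modulo the technical point below, is a valid route. The paper does not stay in the time domain at all: it exploits the time-harmonic forcing by writing $u=\sin(\omega_0 t)\,\Re\hat u(x)$ and $u_0=\sin(\omega_0 t)\,\Re\hat u_0(x)$, so that $\hat u_0=\Phi_{k_0}(\cdot,z_0)$ is the Helmholtz fundamental solution and $\hat u$ obeys the Lippmann--Schwinger equation $(I-K)\hat u=\Phi_{k_0}(\cdot,z_0)$ with $K\psi(x)=\int_{\Omega}\omega_0^2(c^{-2}-c_0^{-2})\,\psi(y)\,\Phi_{k_0}(x,y)\,dy$. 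The hypothesis \eqref{eq:cond1} gives $\|K\|_{L^2(\Omega)\to L^2(\Omega)}=O(\varepsilon)$, a Neumann series yields $\hat u-\hat u_0=O(\varepsilon)$ in $L^2(\Omega)$, and plugging this back into the integral representation---whose kernel $\Phi_{k_0}(x,y)$ is uniformly bounded for $x\in\Gamma$, $y\in\Omega$---immediately produces the $L^\infty(\Gamma)$ bound. No energy estimate, no regularity bootstrap, no Kirchhoff representation is needed. Your argument is more robust (it would survive a non-monochromatic $\lambda$), whereas the paper's is considerably shorter because it leverages the single frequency; on the other hand, the paper's separated ansatz tacitly replaces the causal solution by the steady-state time-harmonic one, glossing over the transient.

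One step of yours needs repair. Since $\lambda(t)=\sin(\omega_0 t)H(t)$ is the causal signal, its distributional second derivative is $\lambda''=-\omega_0^{2}\lambda+\omega_0\,\delta_0$ (coming from $\lambda'(0^+)=\omega_0$). Hence $\partial_t^2 u_0$ carries, in addition to the smooth part you bound, a singular layer on the characteristic cone $\|x-z_0\|=c_0 t$, and your pointwise inequality $|\partial_t^2 u_0|\le \omega_0^{2}/(4\pi\|x-z_0\|)$ fails there. Consequently $f\notin L^\infty(0,T;L^2(\Omega))$, and the energy estimate as you state it does not apply directly. This is fixable: write the impulsive part as $\partial_t g$ with $g(x,t)=(c_0^{-2}-c^{-2})\,\dfrac{\omega_0}{4\pi\|x-z_0\|}\,H\!\bigl(t-\|x-z_0\|/c_0\bigr)$, which satisfies $\sup_t\|g(\cdot,t)\|_{L^2(\Omega)}\le C\varepsilon/\omega_0$ by \eqref{eq:cond1}, and integrate by parts in the energy identity; alternatively mollify $\lambda$ near $t=0$ and pass to the limit. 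Either way the final $O(\varepsilon)$ conclusion survives, but the wavefront contribution should be acknowledged rather than absorbed into the claimed pointwise bound.
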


\begin{proof}
We reduce the wave scattering problems \eqref{eq:we1} and \eqref{eq:we2} to the frequency regime by introducing
\begin{equation}\label{eq:fq1}
u(x, t)=\sin(\omega_0 t) \Re \hat{u}(x),\ \ \ u_0(x, t)=\sin(\omega_0 t)  \Re\hat{u}_0(x)\quad\mbox{for}\ \ t>0.
\end{equation}
It is easily verified that
\begin{equation}\label{eq:rd1}
-\Delta \hat u-\frac{\omega_0^2}{c^2} \hat u=\delta(x-z_0),
\end{equation}
and
\begin{equation}\label{eq:rd2}
-\Delta \hat u_0-\frac{\omega_0^2}{c_0^2} \hat u_0=\delta(x-z_0),
\end{equation}
For both $\hat u$ and $\hat u_0$, we need impose the classical Sommerfeld radiation condition (cf. \cite{CK})
\begin{equation}\label{eq:radiation}
\lim_{\|x\|\rightarrow +\infty} \|x\|\Big\{ \frac{\partial p(x)}{\partial \|x\|}-\mathrm{i} k_0 p(x)  \Big\}=0,
\end{equation}
where $k_0=\omega_0/c_0$ and $p= \hat u$ or $\hat u_0$. The solution to \eqref{eq:rd2} is given by
\begin{equation}\label{eq:fund1}
\hat{u}_0=\Phi_{k_0}(x, z_0):=\frac{1}{4\pi} \frac{e^{\mathrm{i}k_0\|x-z_0\|}}{\|x-z_0\|}.
\end{equation}
The solution to \eqref{eq:rd1} is implicitly given in the following integral equation,
\begin{equation}\label{eq:rd12}
\hat u(x)=\Phi_{k_0}(x, z_0)+\int_{\mathbb{R}^3}\left[\left(\frac{\omega_0^2}{c^2}-\frac{\omega_0^2}{c_0^2} \right) \hat u\right](y) \Phi_{k_0}(x, y)\ \mathrm{d}V(y).
\end{equation}
Introducing the linear integral operator
\begin{equation}\label{eq:int1}
K[\hat u](x):=\int_{\mathbb{R}^3}\left[\left(\frac{\omega_0^2}{c^2}-\frac{\omega_0^2}{c_0^2} \right) \hat u\right](y) \Phi_{k_0}(x, y)\ \mathrm{d}V(y),
\end{equation}
\eqref{eq:rd12} can be written as the following operator equation
\begin{equation}\label{eq:int2}
(I-K)[\hat u](x)=\Phi_{k_0}(x, z_0).
\end{equation}
By \eqref{eq:cond1}, we see that
\begin{equation}\label{eq:int3}
\|K\|_{\mathcal{L}(L^2(\Omega), L^2(\Omega))}\leq C \varepsilon,
\end{equation}
where $C$ is a positive constant depending only on $\|c\|_{L^\infty(\Omega)}, c_0$ and $\omega_0$. By combining \eqref{eq:int2} and \eqref{eq:int3}, one can readily show that
\begin{equation}\label{eq:int4}
\|\hat u\|_{L^2(\Omega)}\leq C\varepsilon.
\end{equation}
Finally, by inserting \eqref{eq:int4} into \eqref{eq:rd12}, one can easily show that
\[
\|\hat u-\hat u_0\|_{L^\infty(\Gamma)}\leq C\varepsilon,
\]
which in turn readily implies \eqref{eq:cond2}.

The proof is complete.
\end{proof}

\subsection{Trajectory recovery}

In this subsection, we consider the scenario that the point wave is in motion following a path $z_0(t): [0, T]\rightarrow \mathbb{R}^3$. In what follows, we set
\begin{equation}\label{eq:vel}
v(t):=\frac{\mathrm{d}z_0(t)}{\mathrm{d} t}\quad \mbox{for}\ \ t\in (0, T),
\end{equation}
which denotes the velocity of the moving emitter at time $t$. Throughout the rest of the paper, we assume that
\[
\|v(t)\|\leq c_0\quad \mbox{for}\ \ t\in (0, T).
\]
For $t\in (0, T)$ and $x\in\Gamma$, define the retarded time $\tau$ by the positive solution to 
\begin{equation}\label{eq:ret}
\tau=t-\frac{\|x-z_0(\tau)\|}{c_0}.
\end{equation}

\begin{thm}\label{thm2.2}
	Let $u_0(x,t)$ be the solution to \eqref{eq:we2}. Suppose that there exists a function $\varepsilon(t)$ satisfying $0<\varepsilon(t)\ll 1, \forall t\in (0, T)$ and
	\begin{equation}\label{eq:conn1}
	t-\tau=\frac{\|x-z_0(\tau)\|}{c_0}=\varepsilon(t)\frac{2\pi}{\omega_0}, \quad (x, t)\in \Gamma\times (0, T).
	\end{equation}
	where $\tau$ is the retarded time defined in \eqref{eq:ret}. Then it holds that
	\begin{equation}\label{eq:appr1}
	\displaystyle u_0(x,t)=\frac{\sin \omega_0 t }{4\pi\|x-z_0(t)\|}+\mathcal{O}(\varepsilon(t)),  \quad (x, t)\in \Gamma\times (0, T).
	\end{equation}
\end{thm}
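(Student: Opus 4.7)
The plan is to write $u_0$ in closed form via the retarded (Li\'enard--Wiechert) potential for a moving point source, and then to read off \eqref{eq:appr1} by Taylor-expanding each factor about $s=t$, the smallness of $t-\tau$ being quantified by \eqref{eq:conn1}. Concretely, since the retarded Green's function of $c_0^{-2}\partial_t^2-\Delta$ in $\mathbb{R}^3$ is $G(x,t)=\delta(t-\|x\|/c_0)/(4\pi\|x\|)$, Duhamel's principle applied to \eqref{eq:we2} yields
\[
u_0(x,t)=\int_0^t\frac{\lambda(s)\,\delta\bigl(t-s-\|x-z_0(s)\|/c_0\bigr)}{4\pi\|x-z_0(s)\|}\,\mathrm{d}s.
\]
Setting $g(s)=t-s-\|x-z_0(s)\|/c_0$, the unique zero of $g$ on $(0,t)$ is the retarded time $\tau$ from \eqref{eq:ret}, and $|g'(\tau)|=|1-\hat n(\tau)\cdot v(\tau)/c_0|$ with $\hat n(\tau)=(x-z_0(\tau))/\|x-z_0(\tau)\|$. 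Applying the identity $\delta(g(s))=\delta(s-\tau)/|g'(\tau)|$ produces the closed form
\[
u_0(x,t)=\frac{\sin(\omega_0\tau)}{4\pi\,\|x-z_0(\tau)\|\,\bigl(1-\hat n(\tau)\cdot v(\tau)/c_0\bigr)},
\]
with the standing condition $\|v\|\leq c_0$ (in practice $<c_0$) guaranteeing a nonvanishing denominator.

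The next step is routine expansion. Writing $\omega_0(t-\tau)=2\pi\varepsilon(t)\ll 2\pi$, a one-term Taylor expansion gives $\sin(\omega_0\tau)=\sin(\omega_0 t-2\pi\varepsilon(t))=\sin(\omega_0 t)+\mathcal{O}(\varepsilon(t))$. Using $z_0\in C^1[0,T]$ and the mean-value theorem yields $\|x-z_0(\tau)\|=\|x-z_0(t)\|+\mathcal{O}(\|v\|_\infty(t-\tau))=\|x-z_0(t)\|+\mathcal{O}(\varepsilon(t)/\omega_0)$, and continuity of $v$ gives $v(\tau)=v(t)+\mathcal{O}(\varepsilon(t))$. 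Since $\Gamma$ lies outside $D$, $\|x-z_0(t)\|$ is uniformly bounded below on $\Gamma\times[0,T]$, so the reciprocal distance is controlled and the above expansions can be plugged into the Li\'enard--Wiechert formula.

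The main obstacle I anticipate is precisely the Doppler factor: the leading term in \eqref{eq:appr1} carries \emph{no} Doppler correction, so the argument has to demonstrate that $\bigl(1-\hat n(\tau)\cdot v(\tau)/c_0\bigr)^{-1}=1+\mathcal{O}(\varepsilon(t))$ uniformly on $\Gamma\times(0,T)$. This is the physically reasonable quasi-static/low-speed regime consistent with the application (a finger or pen moves far more slowly than the propagation speed), but quantitatively it forces either an additional smallness assumption on $\|v\|/c_0$, or an interpretation of the $\mathcal{O}(\varepsilon(t))$ in which the implicit constants depend on $\|v\|_\infty/c_0$ and $\mathrm{dist}(\Gamma,\overline D)$. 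Under either reading, combining the three expansions with the uniform lower bound on $\|x-z_0(t)\|$ collapses the Li\'enard--Wiechert expression to $\sin(\omega_0 t)/(4\pi\|x-z_0(t)\|)+\mathcal{O}(\varepsilon(t))$, which is \eqref{eq:appr1}.
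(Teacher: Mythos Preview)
Your strategy is exactly the paper's: write $u_0$ as the Li\'enard--Wiechert retarded potential and Taylor-expand the three factors $\sin(\omega_0\tau)$, $\|x-z_0(\tau)\|$, and the Doppler denominator about $s=t$ using \eqref{eq:conn1}. The paper's expansions of the first two factors are virtually identical to yours.

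The one substantive divergence is precisely the point you flagged. The paper records the retarded potential with the \emph{unnormalised} vector in the Doppler term,
\[
u_0(x,t)=\frac{\sin(\omega_0\tau)}{4\pi\,\|x-z_0(\tau)\|\bigl(1-c_0^{-1}\langle x-z_0(\tau),v(\tau)\rangle\bigr)},
\]
and then computes
\[
1-\frac{\langle x-z_0(\tau),v(\tau)\rangle}{c_0}
=1-\frac{\|x-z_0(\tau)\|}{c_0}\,\|v(\tau)\|\cos\beta
=1-\varepsilon(t)\,\frac{2\pi}{\omega_0}\,\|v(\tau)\|\cos\beta
=1+\mathcal{O}(\varepsilon(t)),
\]
invoking \eqref{eq:conn1} directly. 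In other words, the paper does not introduce a separate smallness hypothesis on $\|v\|/c_0$; the $\mathcal{O}(\varepsilon(t))$ control of the Doppler factor is obtained from the near-field assumption $\|x-z_0(\tau)\|/c_0=\varepsilon(t)\,2\pi/\omega_0$ itself. Your derivation with $\hat n(\tau)\cdot v(\tau)/c_0$ is the standard Li\'enard--Wiechert form, and with that (correct) normalisation your caveat stands: one needs either $\|v\|_\infty/c_0$ small or to absorb that ratio into the implicit constant. So your proof is sound; just be aware that the paper's argument for the Doppler step rests on the unnormalised formulation as written.
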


\begin{proof}
Then the solution $u_0(x, t)$ to \eqref{eq:we2} is given by the retarded potential (cf. \cite{24Naka})
\begin{equation}\label{eq:1.2}
u_0(x,t)=\frac{\sin \omega_0\tau }{4\pi\|x-z_0(\tau)\|(1-c_0^{-1}\langle x-z_0(\tau),v(\tau) \,\rangle\,)},
\end{equation}
where $\tau$ is the retarded time defined in \eqref{eq:ret} and $\langle\cdot,\cdot\rangle$ denote the usual $L^2$ inner product with respect to the spatial variable.

By straightforward calculations, we have
\begin{equation}\label{eq:dd1}
\begin{split}
  \sin\omega_0\tau=&\sin\omega_0 \left(t-\varepsilon(t)\frac{2\pi}{\omega_0}\right)\\
                 =& \sin\omega_0 t \cos(2\pi\varepsilon(t))-\cos\omega_0 t \sin(2\pi\varepsilon(t))\\
                  =& \sin\omega_0 t +\mathcal{O}(\varepsilon(t)).\\
\end{split}
\end{equation}

Since $t-\tau=\|x-z_0(\tau)\|/c_0=\varepsilon(t)2\pi/\omega_0$, one can also get
\begin{equation}\label{eq:dd2}
\begin{split}
  \|x-z_0(\tau)\|=&\left\|x-z_0\left(t-\varepsilon(t)\frac{2\pi}{\omega_0}\right)\right\|\\
               =&\left\|x-z_0(t)+z'_0(\eta)\varepsilon(t)\frac{2\pi}{\omega_0}\right\|\\
               =& \|x-z_0(t)\||1+\mathcal{O}(\varepsilon(t))|,\\
\end{split}
\end{equation}
where $t-\varepsilon(t)2\pi/\omega_0<\eta\leq t$, and
\begin{equation}\label{eq:dd3}
\begin{split}
  1-\frac{\langle x-z_0(\tau),v(\tau)\,\rangle}{c_0}
  =&1-\frac{\|x-z_0(\tau)\|\,\|v(\tau)\|\cos\beta}{c_0}\\
  =&1-\varepsilon(t)\frac{2\pi}{\omega_0}\,\|v(\tau)\|\cos\beta\\
  =&1-\mathcal{O}(\varepsilon(t)),
\end{split}
\end{equation}
where $\beta\in [0, 2\pi]$ denotes the angle between $x-z_0(\tau)$ and $v(\tau)$. Finally, by plugging \eqref{eq:dd1}, \eqref{eq:dd2} and \eqref{eq:dd3} into \eqref{eq:1.2}, along with straightforward asymptotic analysis, one can obtain
\begin{equation*}
 \displaystyle u_0(x,t)=\frac{\sin \omega_0 t }{4\pi\|x-z_0(t)\|}+\mathcal{O}(\varepsilon),  \quad (x, t)\in \Gamma\times (0, T).
\end{equation*}

The proof is complete.

\end{proof}

\begin{rem}\label{rem:1}
Theorem~\ref{thm2.2} is critical for our subsequent development of the reconstruction algorithm for the inverse problem \eqref{eq:ip1}. It is remarked that the sufficient condition \eqref{eq:conn1} for Theorem~\ref{thm2.2} can be fulfilled by properly choosing a low frequency $\omega_0$ of the point wave, as well as by requiring that the distance between the motion of the point wave and the measurement surface $\Gamma$ is within a reasonable range. In the setup of the proposed input/instruction technology, this is equivalent to saying that the emitter generates a point wave with a relatively low frequency, and the person who performs the input/instruction should stand within a reasonable distance from the sensor. If the sensor is installed on the computer, the latter condition means that the human should not be very far away from the computer.
\end{rem}

We are now in a position to present the imaging functional for the inverse problem \eqref{eq:ip1}, which can be used to qualitatively determine the  trajectory $z_0([0, T])$ by knowledge of $\Lambda_{\omega_0}(z_0; T)$. Define
\begin{equation}\label{eq:test}
\phi (x,t; z)=\frac{\sin \omega t }{4\pi \|x-z\|}\quad\mbox{for}\ \ (x, t, z)\in \Gamma\times (0, T)\times D,
\end{equation}
and
\begin{equation}\label{eq:ind1}
I(t, z)=\frac{\Big|\big\langle u(x, t; z_0(t)), \phi(x, t; z) \big\rangle_{L^2(\Gamma)}\Big|}{\|u(x, t; z_0(t))\|_{L^2(\Gamma)}\|\phi(x, t; z)\|_{L^2(\Gamma)}}\quad\mbox{for}\ \ (z, t)\in D\times (0, T),
\end{equation}
where $u(x, t; z_0(t))=u(x, t)$. Then, we have
\begin{thm}\label{thm:main3}
Let $u(x, t)$ be the measurement data for $(x, t)\in \Gamma\times (0, T)$, corresponding to a moving point wave described in \eqref{eq:we1} and let $I(t, z)$ be defined in \eqref{eq:ind1}. Suppose that $\omega_0$ and $D, \Gamma$ are such chosen that both \eqref{eq:cond1} and \eqref{eq:conn1} are fulfilled. Then, for any fixed $t\in (0, T)$,
\begin{equation}
I(t, z)\approx I_0(t, z):=\frac{\Big|\big\langle \widetilde u_0 (x, t; z_0(t)), \phi(x, t; z) \big\rangle_{L^2(\Gamma)}\Big|}{\|\widetilde u_0(x, t; z_0(t))\|_{L^2(\Gamma)}\|\phi(x, t; z)\|_{L^2(\Gamma)}}\quad\mbox{for}\ \ (z, t)\in D\times (0, T),
\end{equation}
where $\widetilde u_0(x, t; z_0(t))=\widetilde u_0(x, t)$ is defined in \eqref{eq:appr1}. Moreover, $I_0(t, z)$ achieves its maximum value when $z=z_0(t)$ with $I_0(t, z_0(t))=1$.
\end{thm}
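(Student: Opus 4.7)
The plan is to chain the two perturbation estimates already established in Theorems~\ref{thm:main1} and~\ref{thm2.2} to obtain the approximation $I(t,z)\approx I_0(t,z)$, and then to deduce the maximality statement directly from the Cauchy--Schwarz inequality in $L^2(\Gamma)$, after recognizing that $\widetilde u_0(\cdot,t;z_0(t))$ coincides with $\phi(\cdot,t;z_0(t))$.

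First I would combine the two preceding theorems. Theorem~\ref{thm:main1} gives
\[
\|u(\cdot,t)-u_0(\cdot,t)\|_{L^\infty(\Gamma)}\le C\varepsilon
\]
uniformly in $t\in[0,T]$, while Theorem~\ref{thm2.2} yields $u_0(x,t)=\widetilde u_0(x,t)+\mathcal{O}(\varepsilon(t))$ on $\Gamma\times(0,T)$. The triangle inequality then produces
\[
\|u(\cdot,t)-\widetilde u_0(\cdot,t)\|_{L^\infty(\Gamma)}=\mathcal{O}(\varepsilon+\varepsilon(t)),
\]
and since $\Gamma$ has finite surface measure, the same estimate holds in $L^2(\Gamma)$ up to an absolute constant.

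Next I would propagate this $L^2$-smallness through the normalized correlation in~\eqref{eq:ind1}. The numerator $\langle u(\cdot,t;z_0(t)),\phi(\cdot,t;z)\rangle_{L^2(\Gamma)}$ differs from its $\widetilde u_0$-counterpart by $\langle u-\widetilde u_0,\phi\rangle_{L^2(\Gamma)}$, which is of size $\mathcal{O}(\varepsilon+\varepsilon(t))$ by Cauchy--Schwarz and the fact that $\|\phi(\cdot,t;z)\|_{L^2(\Gamma)}$ is uniformly bounded for $z\in D$ because $\mathrm{dist}(\Gamma,D)>0$ keeps $\|x-z\|$ bounded away from zero. The same reasoning applies to $\|u(\cdot,t;z_0(t))\|_{L^2(\Gamma)}$. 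Provided both denominators remain uniformly bounded away from zero, a routine perturbation estimate on the ratio yields $|I(t,z)-I_0(t,z)|=\mathcal{O}(\varepsilon+\varepsilon(t))$, which is the asserted approximate equality. The lower bound on the denominators holds whenever $\sin(\omega_0 t)$ is not exceptionally small; the finitely many zeros of $\sin(\omega_0 t)$ on $(0,T)$ can be excluded a priori.

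For the maximality claim, I would compare the defining formulas~\eqref{eq:test} and~\eqref{eq:appr1}: with $\omega=\omega_0$, one has the pointwise identity $\widetilde u_0(x,t;z_0(t))=\phi(x,t;z_0(t))$ on $\Gamma\times(0,T)$. Substituting $z=z_0(t)$ into the definition of $I_0$ then gives $I_0(t,z_0(t))=1$ trivially, while the Cauchy--Schwarz inequality in $L^2(\Gamma)$ bounds $I_0(t,z)\le 1$ for every $z\in D$, proving that $z_0(t)$ is a maximizer.

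The main obstacle I anticipate is not conceptual but technical: one must justify that the two denominators in~\eqref{eq:ind1} remain uniformly bounded below as $z$ ranges over $D$ and as $t$ varies over the relevant subset of $(0,T)$, in order to convert the absolute smallness of the numerator perturbation into a small \emph{relative} error in the quotient $I(t,z)$. Apart from this book-keeping, the argument is a clean combination of the triangle inequality, Cauchy--Schwarz, and the two theorems proved above.
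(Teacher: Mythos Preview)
Your proposal is correct and follows essentially the same route as the paper: combine Theorems~\ref{thm:main1} and~\ref{thm2.2} via the triangle inequality to get $u\approx\widetilde u_0$ on $\Gamma$, then invoke Cauchy--Schwarz for the maximality of $I_0$. Your write-up is in fact more careful than the paper's own proof, which simply asserts $u\approx\widetilde u_0$ and ``the maximum behaviour \dots\ can be easily seen by using the Cauchy--Schwarz inequality'' without mentioning the denominator lower bounds or the zeros of $\sin(\omega_0 t)$ that you flag.
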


\begin{proof}
Since both \eqref{eq:cond1} and \eqref{eq:conn1} are satisfied, by combining Theorems~\ref{thm:main1} and \ref{thm2.2}, one can easily obtain that
\[
u(x, t; z_0(t))\approx \widetilde u_0(x, t; z_0(t)).
\]
The maximum behaviour of $I_0(t, z)$ can be easily seen by using the Cauchy-Schwarz inequality.

The proof is complete.

\end{proof}

Based on Theorem~\ref{thm:main3}, we next present the trajectory reconstruction scheme for the inverse problem \eqref{eq:ip1}.

\begin{table}[h]
\centering
\begin{tabular}{cp{.8\textwidth}}
\toprule
\multicolumn{2}{l}{{\bf Algorithm T:}\quad Reconstruction of the trajectory of a moving point emitter}\\
\midrule
 {\bf Step 1} & Properly choose a low frequency $\omega_0$ and start to generate the point wave of the form \eqref{eq:point wave}--\eqref{eq:tthhh} from $t=0$.  \\
 \midrule
{\bf Step 2} & Set the emitter in motion following a specific path $z_0(t)$, depending on the desired input/instruction. The sensor collects the scattered wave data $u(x, t)$ at the measurement points $\{x_j\}\in \Gamma$ and a sequence of discrete time points $\{t_j\}\in (0, T]$. \\
\midrule
{\bf Step 3} & Select a  sampling mesh $\mathcal{T}_h$ in a region containing $D$. For each time point $t_j$, calculate the imaging functional $I(t_j, z)$ defined in \eqref{eq:ind1} for each $z\in \mathcal{T}_h$.\\
\midrule
{\bf Step 4} & Locate the global maximum point of $I(t_j, z)$ for $z\in\mathcal{T}_h$ with the maximum value being nearly $1$, which is the approximation to $z_0(t_j)$. \\
\midrule
{\bf Step 5} & After the approximate determination of $z_0(t_j)$ for all the discrete time points, one can post-process the discrete data points to obtain the reconstructed trajectory $z_0([0, T])$.  \\
\bottomrule
\end{tabular}
\end{table}

We would like to emphasize that clearly the proposed scheme yields a qualitative reconstruction of the moving trajectory. Nevertheless, we shall conduct extensive numerical experiments in Section~\ref{sect:numerical} to show that the proposed method can provide an effective and efficient reconstruction of the trajectory. Before we present the numerics, we shall develop two speed-up techniques in the next subsection, which can significantly reduce the computational cost in Steps 3 and 4 of Algorithm T. The post-processed technique shall be discussed in the next section.

\subsection{Two speed-up techniques}\label{sect:FastTechniques}

Motivated by the fact that the sound speed is finite, we develop two local sampling techniques to reduce the computational efforts.
The rationale behind  is to replace the global sampling domain $D$ by a sequence of sampling subdomains, e.g., balls. We will specify the details of constructing the dynamic sampling domains in sequence and in parallel, respectively. Without loss of generality, assume that $(0, T]$ is uniformly divided into $N_t$ time steps $t_j=jT/N_t, \,j=1,2,\cdots,N_t,\, N_t\in\mathbb{N}$.

\subsubsection{Sequential tuning}

First, we present a time-marching technique to recover the locations of the source at different time steps.

\begin{enumerate}
  \item Locate the global maximum point of $I(t_1,z)$ for $z\in D$ and denote it by $z_1$, i.e.,
	      \[ z_1:=\arg\max\limits_{z\in D}I(t_1,z). \]
  \item Narrow down the sampling domain  to $B_1 \subset D$, where $B_1$ is a ball centered at $z_1$ with radius $\|v(t)\|_{\infty} T/N_t$. Then locate the local maximum point of $I(t_2,z)$ for $z\in B_1$ and denote it by $z_2$, i.e.,
	  \[ z_2:=\arg\max\limits_{z\in B_1}I(t_2,z). \]
	It is obvious that $z_2$ is the global  maximum point of $I(t_2,z)$ in $D$.
  \item Following this idea and progressing with the time sequence $\{t_j\}$, one can  construct a sequence of local sampling subdomains $\{B_j\}$ and find the corresponding local maximum points $\{z_j\}$ in an alternating manner:
	 \[z_1\to B_1\to z_2\to B_2\to z_3\to B_3\to \cdots\]
	 such that
	\begin{align*}
	 B_j:=&\{x\in\mathbb{R}^3\mid \|x-z_j\|<\|v(t)\|_{\infty} T/N_t\}, \\
      z_j:=&\arg\max\limits_{z\in B_{j-1}}I(t_j,z),\quad j=2,3,\cdots,N_t.
	   \end{align*}
	Thus the ordered sequence of $\{z_j\},\, j=1,2,\cdots,N_t$ forms a discrete marching trajectory.
\end{enumerate}

\subsubsection{Parallel tuning}
Next,  we propose a hybrid method that combines dichotomy with parallel computing techniques.
Let $\lceil \cdot \rceil$ and $ \lfloor \cdot \rfloor$ denote the rounding functions which round $\cdot$ to the nearest integer greater and less than $\cdot$, respectively.
\begin{enumerate}
  \item Locate the global maximum point of $I(t_{N_t},z)$ for $z\in D$ and denote it by $z_{01}$, i.e.,
	   \[ z_{01}:=\arg\max\limits_{z\in D}I(t_{N_t},z). \]
  \item Narrow down the sampling domain to $B_{01}\subset D$, where $B_{01}$ is a ball centered at $z_{01}$ with radius $\|v(t)\|_{\infty} \lceil \frac{N_t}{2}\rceil T/N_t$.  Then locate the local maximum point of $I(t_{\lfloor \frac{N_t}{2}\rfloor},z)$ for $z\in B_{01}$ and denote it by $z_{11}$, i.e.,
	  \[ z_{11}:=\arg\max\limits_{z\in B_{01}}I(t_{\lfloor \frac{N_t}{2}\rfloor},z). \]
	It is obvious that $z_{11}$ is the global maximum point of $ I(t_{\lfloor \frac{N_t}{2}\rfloor},z)$ in $D$.
  \item Employ the parallel computing  to simultaneously locate the maximum points of $I(t_{\lfloor \frac{N_t}{2}\rfloor},z)$ and $I(t_{\lfloor \frac{3N_t}{2^2}\rfloor},z)$ in $B_{11}$, respectively, where $B_{11}$ is a ball  centered at $z_{11}$ with  radius $|v(t)|_{\infty} \lceil \frac{N_t}{2^2}\rceil T/N_t$. The corresponding maximum points can be denoted  by $z_{21}$ and $z_{22}$, respectively, i.e.,
	\begin{align*}
	z_{21}:=\arg\max\limits_{z\in B_{11}}I(t_{\lfloor \frac{N_t}{2^2}\rfloor},z), \\
	z_{22}:=\arg\max\limits_{z\in B_{11}}I(t_{\lfloor \frac{3N_t}{2^2}\rfloor},z).
	\end{align*}
	
  \item Following this idea, in general one can construct the sequence of local sampling domains $\{B_{in}\}$ and find the corresponding local maximum points $\{z_{in}\}$ in an alternating and parallel manner:
	\[z_{01}\to B_{01}\to z_{11}\to B_{11}\to
	\begin{cases}
	z_{21}\to B_{21}\to \begin{cases}
	z_{31}\to B_{31}\to\cdots\\
	z_{32}\to B_{32}\to\cdots
	\end{cases}\vspace{1mm}\\
	z_{22}\to B_{22}\to\begin{cases}
	z_{33}\to B_{33}\to\cdots\\
	z_{34}\to B_{34}\to\cdots
	\end{cases}
	\end{cases} \]
	such that
	\begin{align*}
    z_{in}:=&\arg\max\limits_{z\in B_{i-1,\lceil \frac{n}{2}\rceil}}I(t_{\lfloor \frac{(2n-1)N_t}{2^i}\rfloor},z),\\
	B_{in}:=&\{x\in\mathbb{R}^3\mid \|x-z_{in}\|<|v(t)|_{\infty} \lceil {N_t}/{2^{i+1}}\rceil T/N_t\},\\
	& n=1, \cdots, 2^{i-1} ,\, i=1, \cdots,\lfloor \log_2 N_t \rfloor.
	\end{align*}
   	Thus the sequence of $\{z_{j}\}$, $j=\lfloor \frac{(2n-1)N_t}{2^i}\rfloor$, $i=1, 2, \cdots, \lfloor \log_2 N_t \rfloor$, $n=1,2, \cdots, 2^{i-1}$, forms a discrete trajectory.
\end{enumerate}
 It is noted  that $j=1,2, \cdots ,N_t,$ if and only if $\log_2 N_t \in \mathbb{N}$.

\begin{figure}
    \hfill\subfigure[]{\includegraphics[  width=0.5\textwidth]{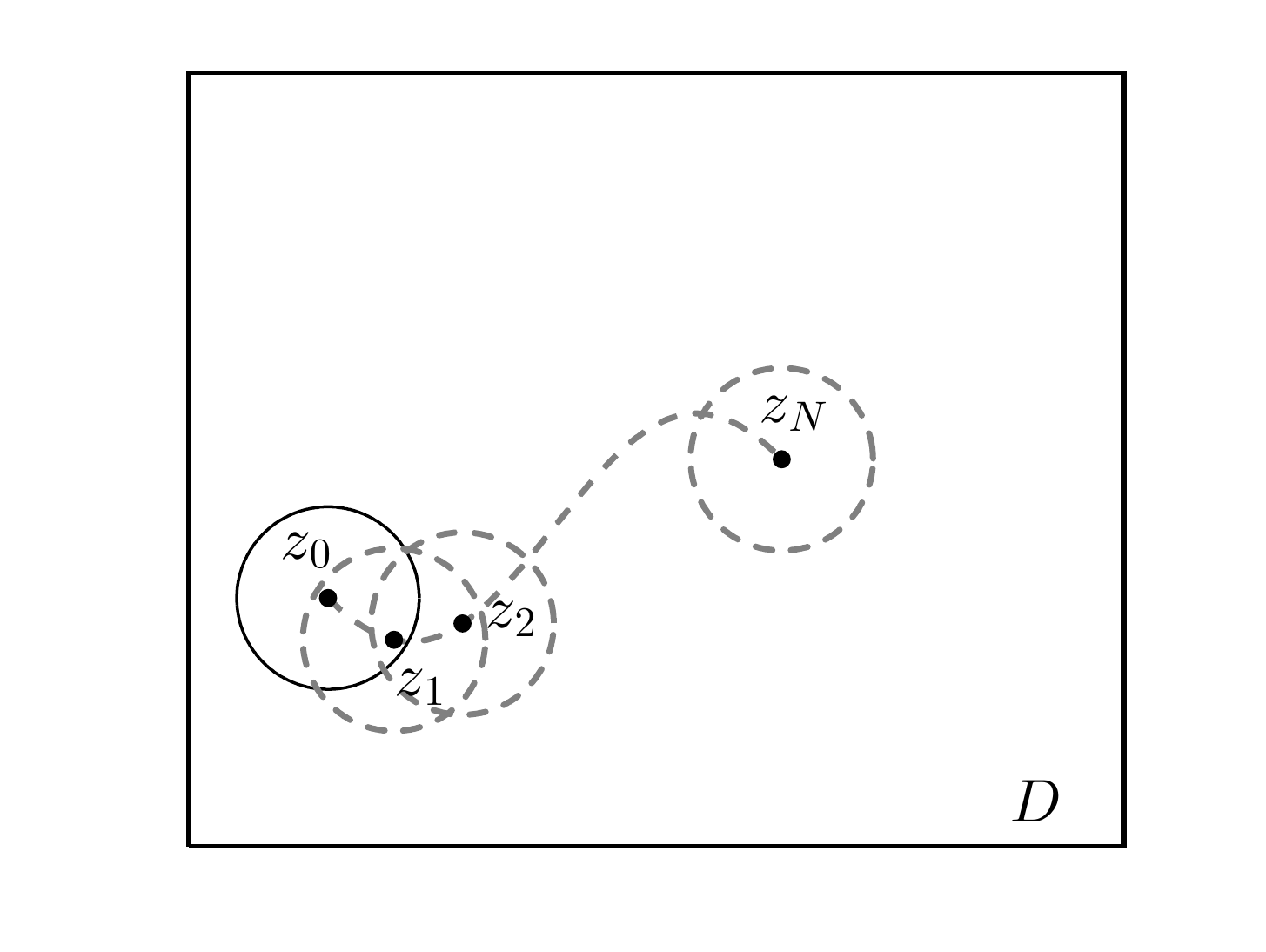}}\hfill
    \hfill\subfigure[]{\includegraphics[  width=0.5\textwidth]{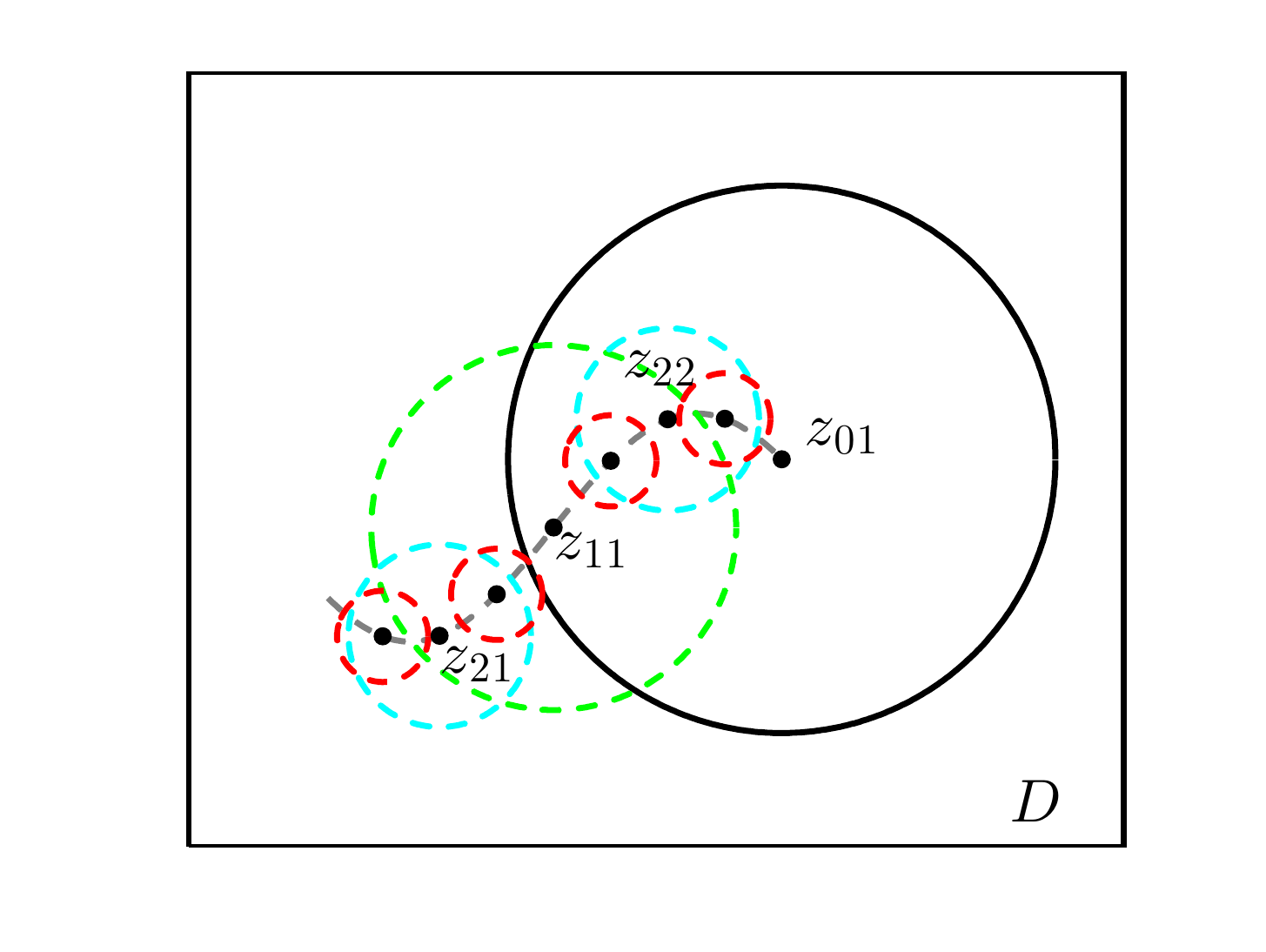}}\hfill
  \caption{Illustration of two speed-up techniques. Left: sequential tuning (the solid circle denotes $B_1$ and the dashed circles stand for $B_2$, $B_3$ and $B_{N}$); Right: parallel tuning (the black solid circle denotes $B_{01}$, the green dashed circle denotes $B_{11}$ , the cyan dashed circles denote $B_{21}$ and $B_{22}$, and the red dashed circles denote $B_{31}$, $B_{32}$, $B_{33}$ and $B_{34}$ ).}\label{fig.2}
\end{figure}

\begin{rem}
Figure \ref{fig.2} shows the idea of the two speed-up techniques.  The respective time complexity of the two techniques are
\begin{equation*}
  T_1(N_t)=\mathcal{O}(N_t),
\end{equation*}
and
\begin{equation*}
  T_2(N_t)=\mathcal{O}(1+\log_2 N_t ).
\end{equation*}
However, parallel tuning technique costs some time in assigning tasks. Moreover, time steps are not very large in our experiments. Therefore, unless otherwise specified,  we suggest using sequential tuning  technique  to reduce the  computational cost.
\end{rem}

\section{Numerical examples}\label{sect:numerical}

In this section, we will present several numerical examples to demonstrate the feasibility and effectiveness of the proposed method.

 All the following numerical experiments are carried out using  MATLAB R2013a   on a Lenovo workstation with $2.3$GHz Intel Xeon E5-2670 v3 processor and 128GB of RAM.  Synthetic  wave field data are generated by solving direct problem of \eqref{eq:we1} and \eqref{eq:we2} by using the quadratic finite elements on a truncated  spherical  domain enclosed by an absorbing boundary condition. The mesh of the forward solver is successively refined till the relative error of the successive measured wave data is below $0.1\%$. To test the stability of our proposed method, we  add random noise to the synthetic data $u(x,t)$. The noisy data are given by the following formula
\begin{equation*}
  u^\epsilon:=u(1+\epsilon r)
\end{equation*}
where $r$ are point-wise uniform random numbers, ranging from $-1$ to $1$, and $\epsilon>0$ represents the noise level.
Unless otherwise specified, we fix $\epsilon=0.05$, namely, 5\% noise was added to the measurement data.

 Next, we specify details of the discretization. Using spherical coordinates $(r, \theta, \varphi): x=(r\sin\theta\cos\varphi, r\sin\theta\sin\varphi, r\cos\theta)$,  the limited aperture  $\Gamma$  are taken on the patch of the sphere  with  radius $r=10$m, polar angle $\theta\in (\pi/4,3\pi/4)$ and azimuthal angle $\varphi\in(-\pi/4,\pi/4)$. In all subsequent experiments,  the time step is set to be $\Delta t=0.1$s, the angular frequency is set to be $\omega_0=1$rad/s and the sound speed of the background medium is chosen as $c_0=330$m/s.  Suppose that we have  $N_m$ equidistantly distributed receivers $x_m\in \Gamma,\, m=1,2,\cdots,N_m$,  and  $N_t$  equidistant recording time steps $t_j\in(0,T],\,j=1,2,\cdots,N_t$, where $N_m=200$ and $N_t=T/\Delta t$. Then  the measurement data is an $N_m\times N_t$ array whose $(m,j)$-th entry is $u_0(x_m,t_j)$ or $u(x_m,t_j)$. Hence, the discretized version of the  indicator \eqref{eq:ind1} can be written in the form
 \begin{equation*}
  \displaystyle I(t_j,z)=\frac{\sum\limits_{ m=1 }^ {N_m} {u_0(x_m,t_j) \phi(x_m,t_j;z)\Delta s_x}}{\left(\sum\limits_{ m=1 }^ {N_m} {u_0^2(x_m,t_j) \Delta s_x}\right)^{1/2} \left(\sum\limits_{ m=1 }^ {N_m} { \phi^2(x_m,t_j;z)\Delta s_x}\right)^{1/2}}, \quad j=1,2,\cdots, N_t,
 \end{equation*}
 where $\Delta s_x $ denotes  the area of the grid cell of receivers. Finally, at each time step  $t_j$, we use a uniformly distributed $100 \times 100 \times 100 $ sampling mesh $\mathcal{T}_h$  over the cube $D:=[-8\mathrm{m},\, 8\mathrm{m}]^3$ to locate the global maximum points  $\{z_j\}$ of discretized  indicator function $I(t_j,z)$ for $z\in \mathcal{T}_h$.  Thus, these discrete  points $\{z_j\}$ build the discretized version of the reconstructed  moving trajectory.

In addition, if we want to obtain a smooth curve as the reconstructed trajectory,  then we may post-process the discrete data points $\{z_j\}$ by a suitable Fourier approximation.
The post-processed trajectory  can be chosen as the following truncated Fourier expansion of order $P\in\mathbb{N}$:
 \begin{equation*}
 z(t):=a_0+\sum\limits_{n=1}^P (a_n \cos (n t) + b_n \sin (n t)), \quad t\in (0, T],
 \end{equation*}
where the Fourier coefficients are given by
 \begin{equation*}
 \begin{aligned}
   &a_0=\frac{1}{T}\sum \limits_{ j=1 }^ {N_t} z_j\frac{T}{N_t}, \\
    &a_n=\frac{2}{T}\sum \limits_{ j=1 }^ {N_t} z_j\cos (n t_j) \frac{T}{N_t}, \\
     &b_n=\frac{2}{T}\sum \limits_{ j=1 }^ {N_t} z_j\sin (n t_j) \frac{T}{N_t} ,\quad n=1,2,\cdots,P.\\
   \end{aligned}
 \end{equation*}

We would like to point out that
in order to eliminate the artifacts of large oscillations in the reconstructed trajectories, the Fourier expansion of a low order is usually preferable in the post-processing step. Conversely, if one wants to capture the presumable high-frequency details of the trajectory, then the Fourier approximation of a high order might be more suitable. Therefore, an appropriate choice of the truncated order may require some {\em a priori} information of the noise level $\epsilon$ and the smoothness of $z_0(t)$. If not otherwise specified, we will choose $P=3$ in what follows.

\begin{rem}
In order to obtain a three-dimensional visualization of the reconstructed points and curves, some 2D projections (shadows with gray color) are also added to the configurations of the reconstructions.
\end{rem}

\begin{figure}[htb!]
  \centering
   \hfill\subfigure[]{\includegraphics[  width=0.45\textwidth]{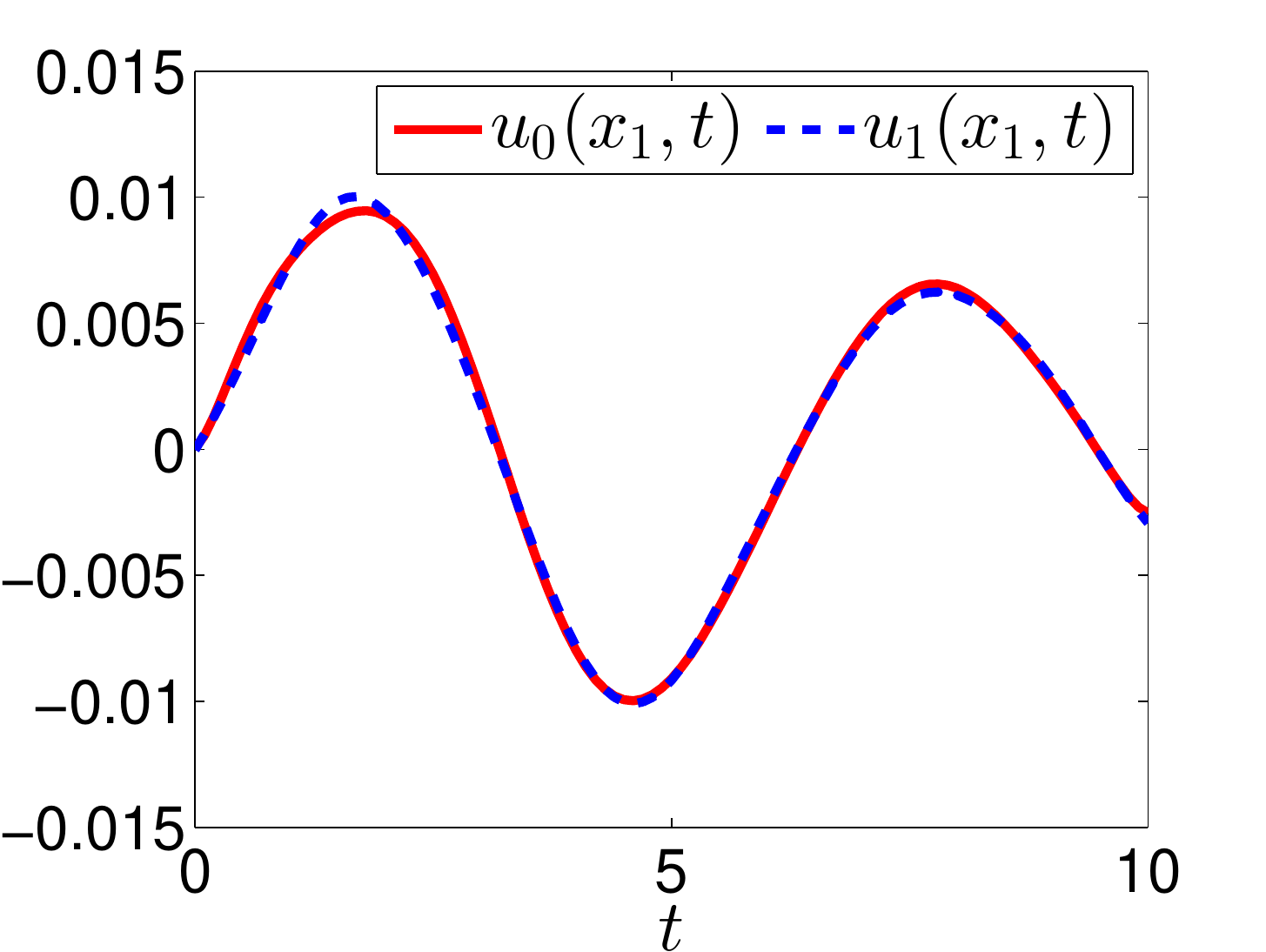}}\hfill
    \hfill\subfigure[]{\includegraphics[  width=0.45\textwidth]{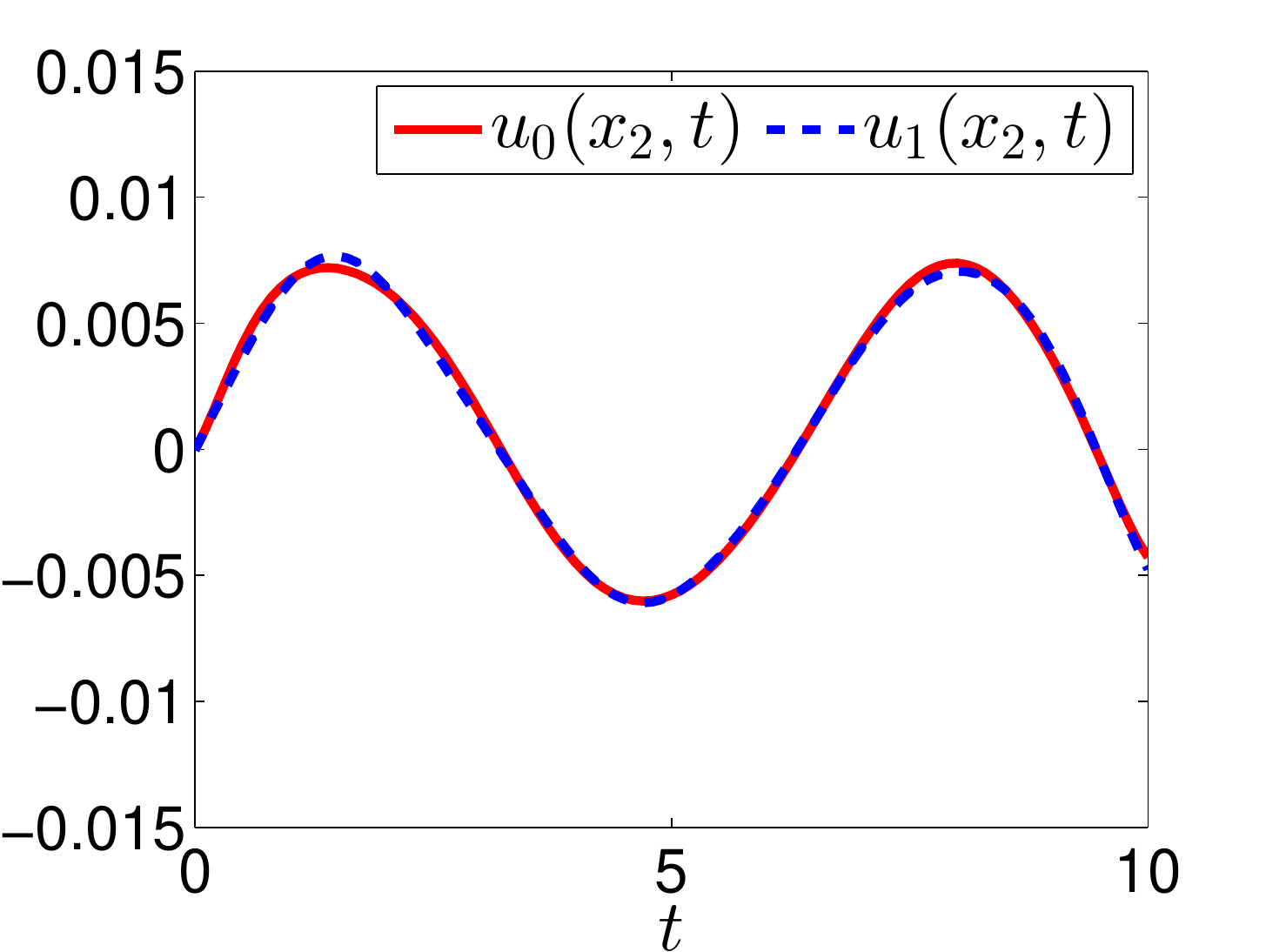}}\hfill\\
     \hfill\subfigure[]{\includegraphics[width=0.45\textwidth]{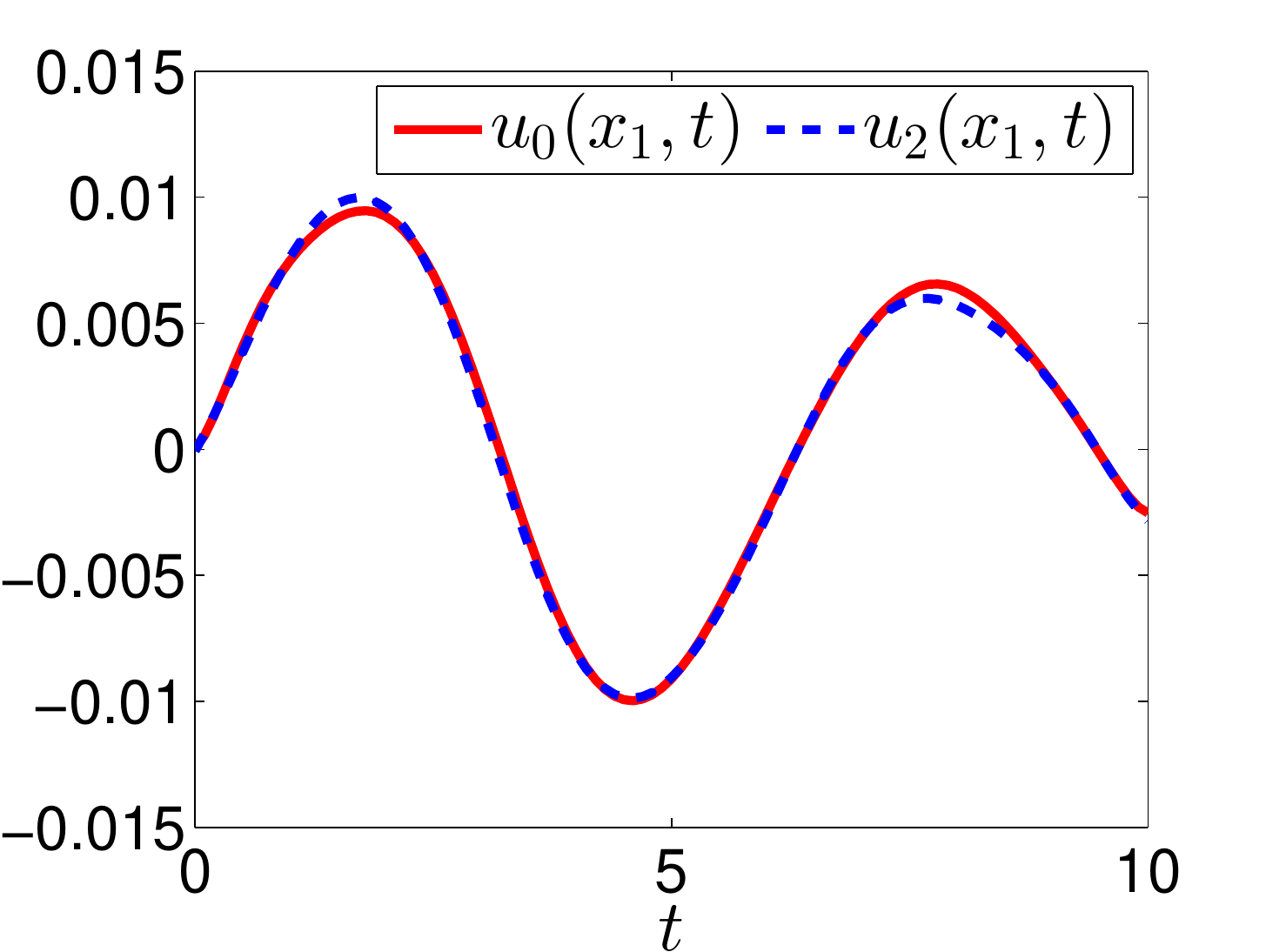}}\hfill
      \hfill\subfigure[]{\includegraphics[   width=0.45\textwidth]{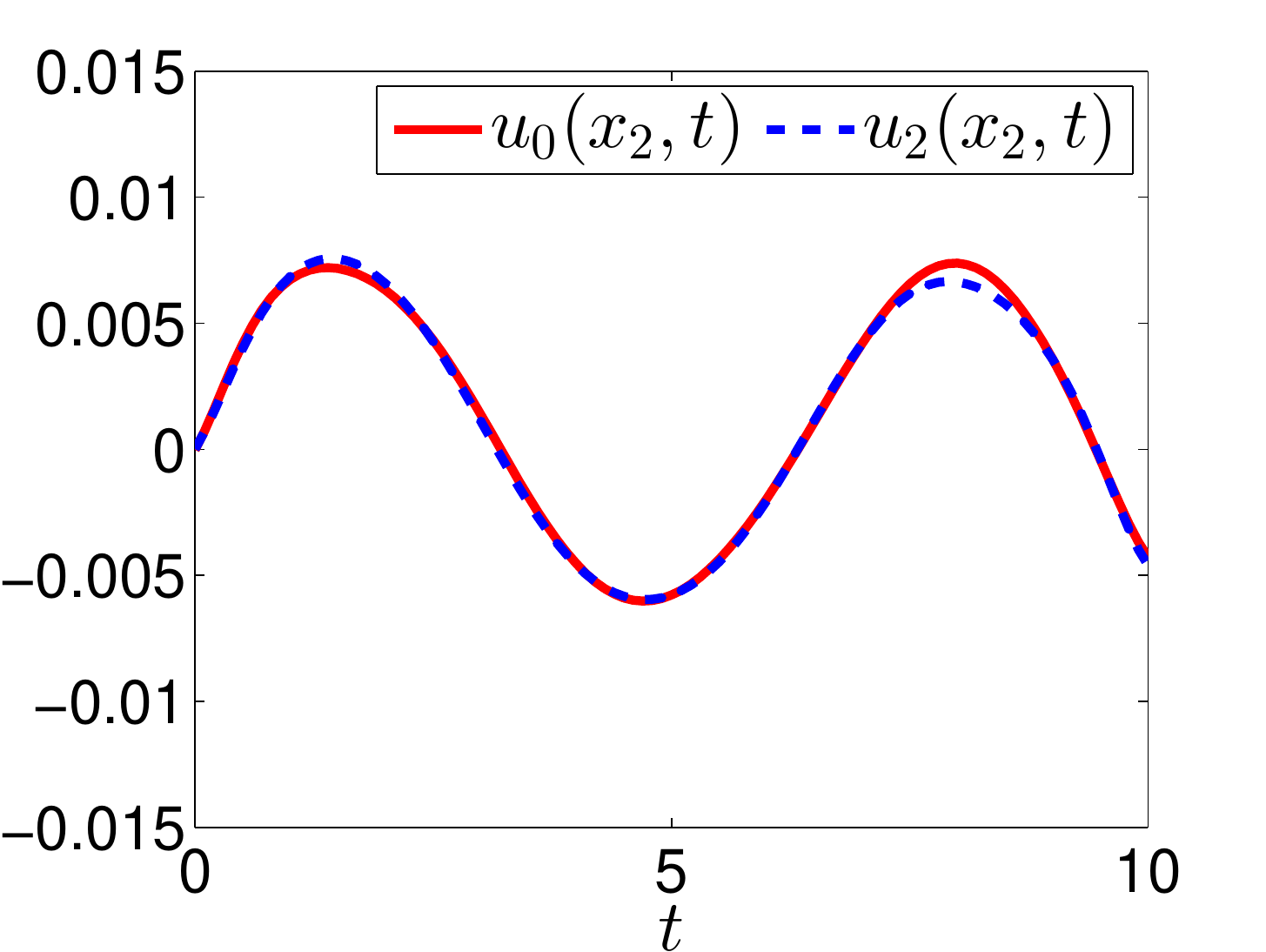}}\hfill\\
    \caption{Numerical results for Example \ref{exp0}. The top row compares Case (i) with Case (ii) at receivers $x_1$ and $x_2$, respectively; the bottom row compares Case (i) with Case (iii) at receivers $x_1$ and $x_2$, respectively.}\label{fig.3.0}
\end{figure}

\begin{rem}
In Theorem \ref{thm2.2}, the assumption $t-\tau=c_0^{-1}\|x-z_0(\tau)\|\ll 2\pi/\omega$ is only for theoretical purpose. As we will see in the  following examples, a moderately small quantity of $c_0^{-1}\|x-z_0(\tau)\|$ could be sufficient to yield satisfactory reconstruction.
\end{rem}


\begin{example}\label{exp0}
In the first example, we aim to verify Theorem~\ref{thm:main1} numerically, i.e., $u$ is approximated to $u_0$.
Assume that the emitter moves along the moving trajectory
\begin{equation*}
\displaystyle{z_0(t)=\left( 0, 3\cos\left(\frac{3\pi}{20}t+\frac{\pi}{4}\right),    3\sin\left(\frac{3\pi}{20}t+\frac{\pi}{4}\right) \, \right)},\quad t\in (0\mathrm{s}, 10\mathrm{s}],
\end{equation*}
which is depicted in Figure \ref{fig.3.1}(a) and served as an approximation of a letter ``C".
Next,  we  select two receivers (sensors)  which are located at $x_1=(5\mathrm{m},-5\mathrm{m}, 5\sqrt{2}\mathrm{m}) $ and $x_2=(10\mathrm{m}, 0\mathrm{m}, 0\mathrm{m})$,   and then  let both receivers (sensors)  receive wave field generated from the emitter. Here we consider  three cases.

(i)\ We consider an emitter moves in a homogeneous space with background wave speed being $c_0=330$m/s in $\mathbb{R}^3$. Here, we define the wave field by $u_0$.

(ii)\ We study a simplified scenario that a person is wearing an  emitter on one of his/her finger and moving the finger to  write a letter ``C".  We  assume that the person faces  receivers and  consider his/her body $\Omega$ as an inhomogeneity.  For simplicity, let  $c(x)=1500$m/s signify the wave speed in   $\Omega$ and $c_0=330$m/s denote wave speed  in $\mathbb{R}^3\backslash \Omega$.  We also choose  $\Omega$ as a cuboid domain with center $(-2\mathrm{m}, 0\mathrm{m}, 0\mathrm{m})$ and size $2\mathrm{m}\times10\mathrm{m}\times10\mathrm{m}$. In this case, the radiated wave field is denoted by $u_1$.

(iii)\ We assume that  the person stands back against the receivers and the wave transmits through the body to receivers. So let  $\Omega$ denote a cuboid domain with center $(2\mathrm{m}, 0\mathrm{m}, 0\mathrm{m})$ and size $2\mathrm{m}\times10\mathrm{m}\times10\mathrm{m}$. In this case, the radiated wave field is denoted by $u_2$.

The numerical results verifies that $u_0$ is approximately equal to $u_1$ and $u_2$ for both receivers  at every instant, respectively, as shown in Figure \ref{fig.3.0}. Figure \ref{fig.3.0} shows that the location of inhomogeneous medium has not made much  impact on the  measured wave field. In other words, no matter which direction we face, the received signals are almost the same.  Moreover, we also did some experiments with $\omega_0< 1$ and some other receivers. All the numerical results showed that the errors between u0 and u are very small and  the error decreases as the angular frequency decreases. However, the reconstruction of the trajectory would deteriorate as the angular frequency tends to zero.  Hence the trade-off deserves further investigations in practical applications.

In the next example, we will  recover the moving trajectories  in  the above three cases.
\end{example}


\begin{figure}[htb!]
\subfigure[]{\includegraphics[clip,width=0.32\textwidth]{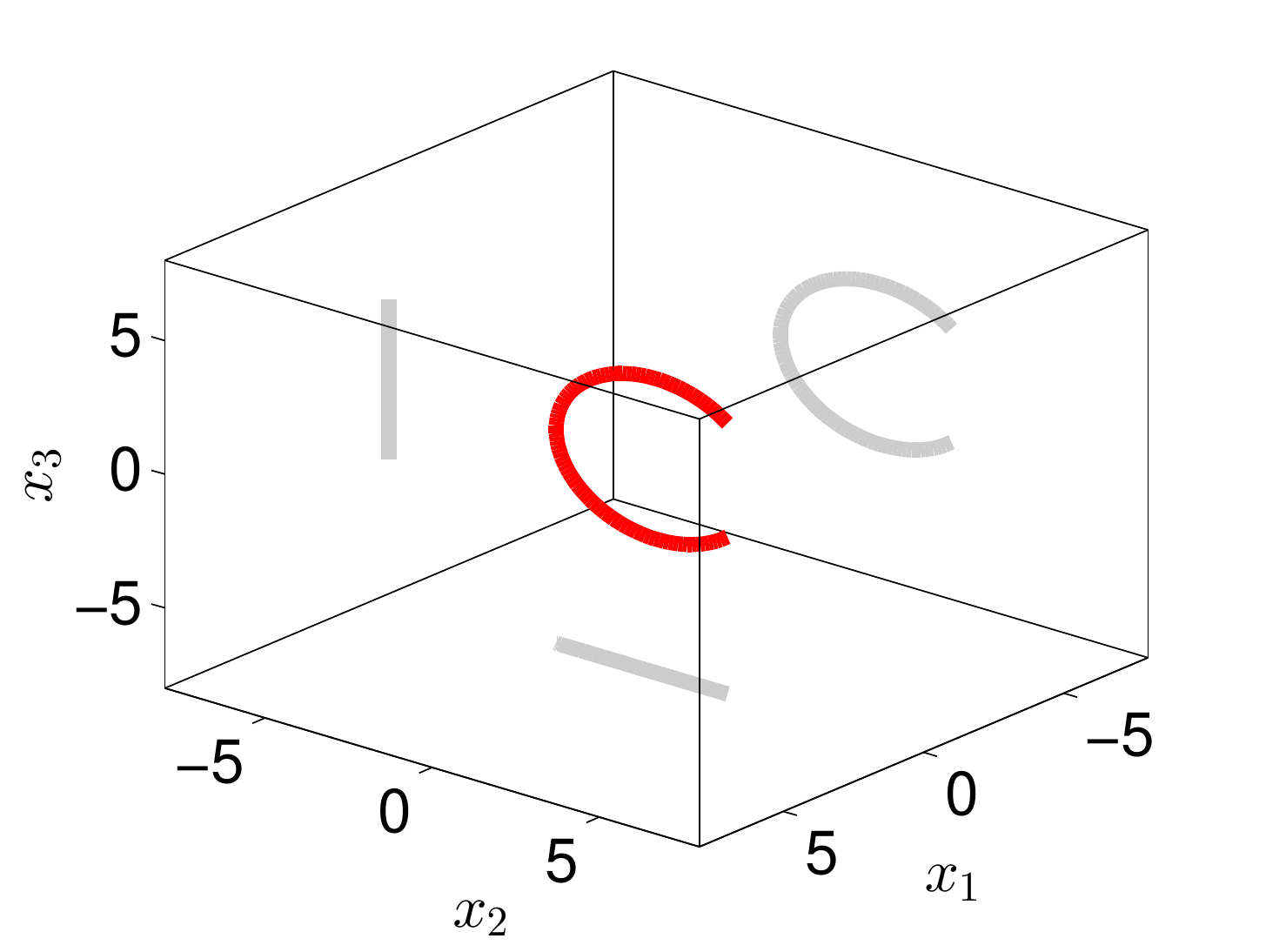}}
\subfigure[]{\includegraphics[width=0.32\textwidth]{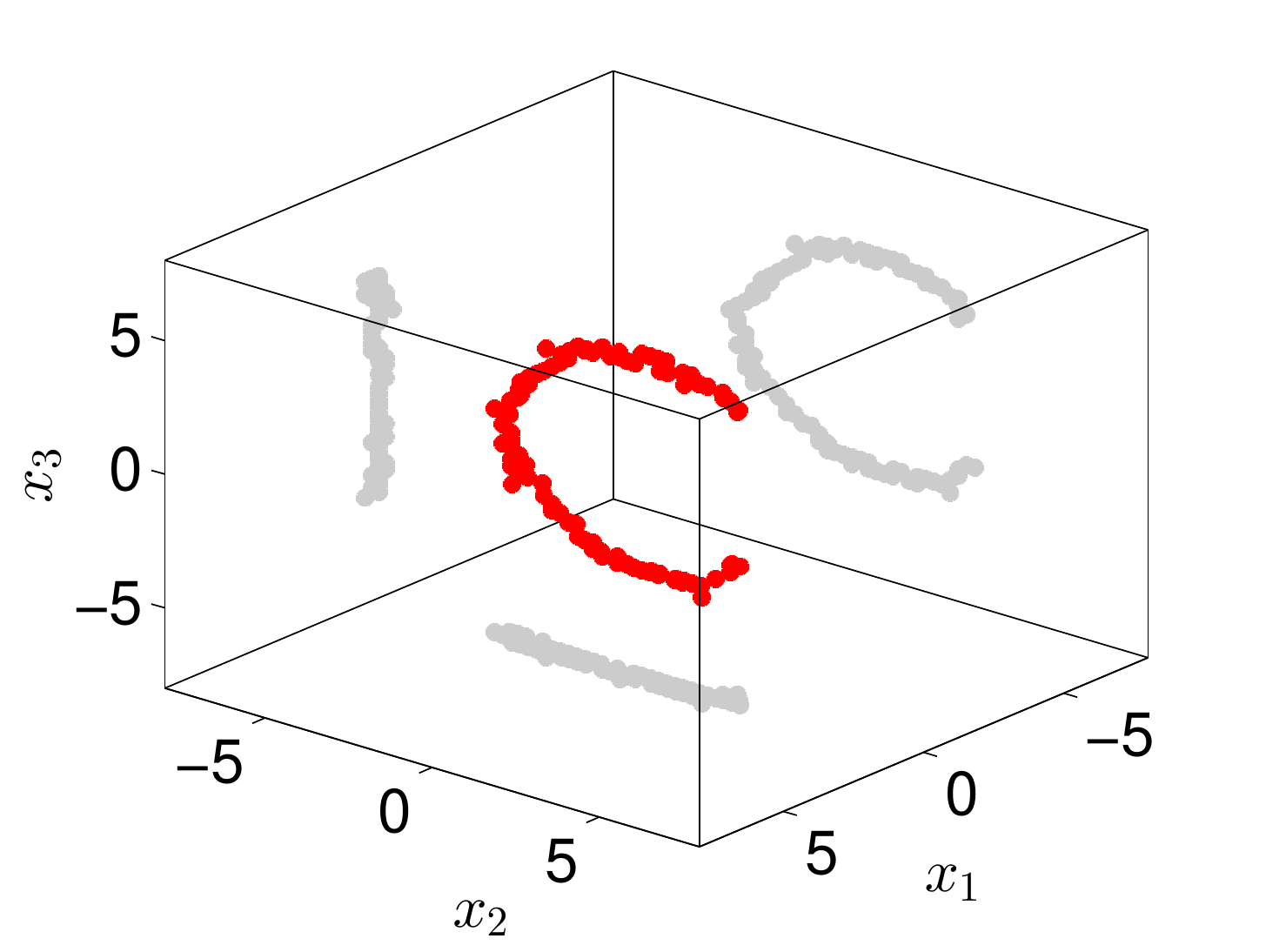}} \subfigure[]{\includegraphics[width=0.32\textwidth]{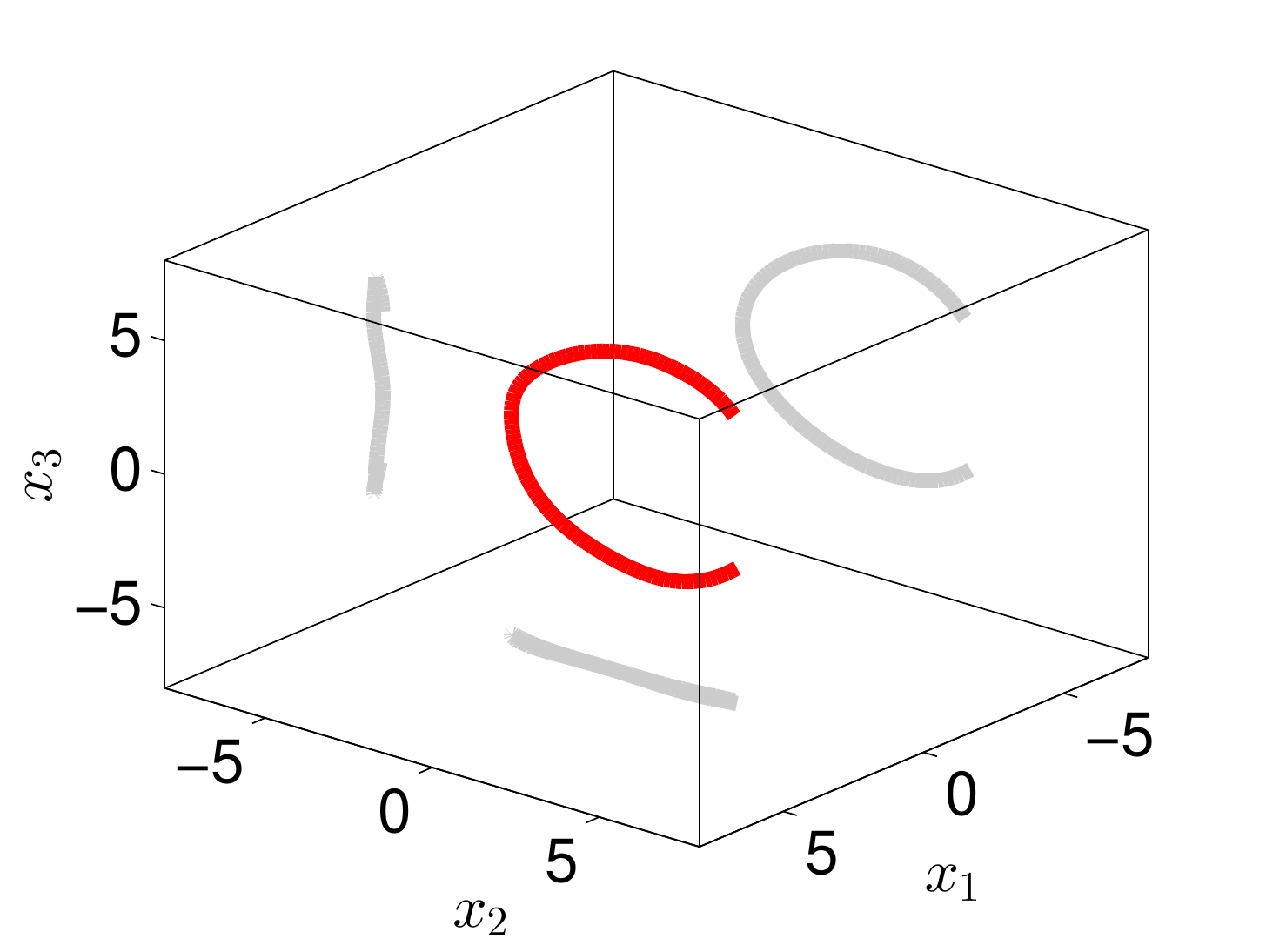}}\\
\subfigure[]{\includegraphics[width=0.32\textwidth]{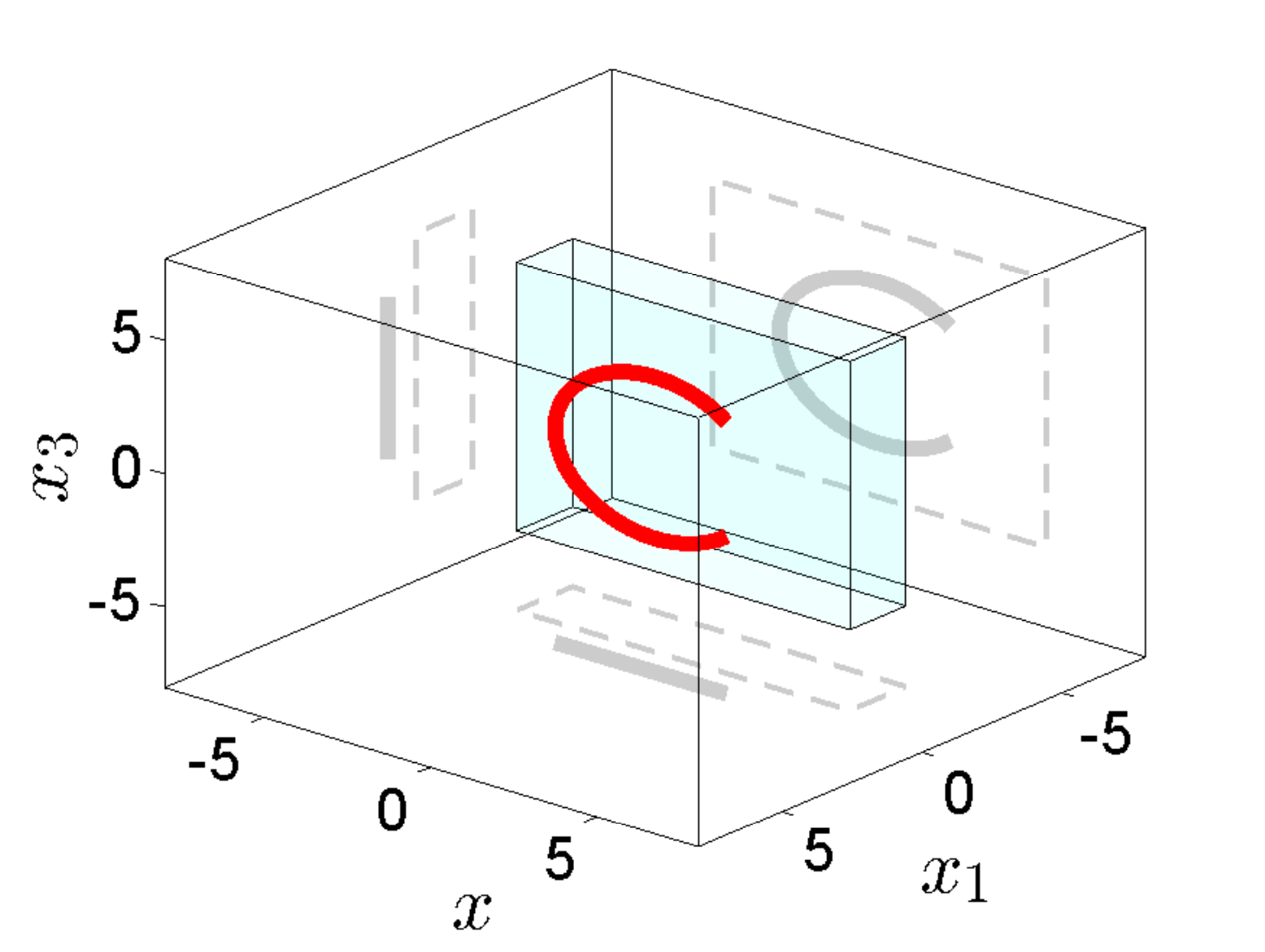}}
\subfigure[]{\includegraphics[width=0.32\textwidth]{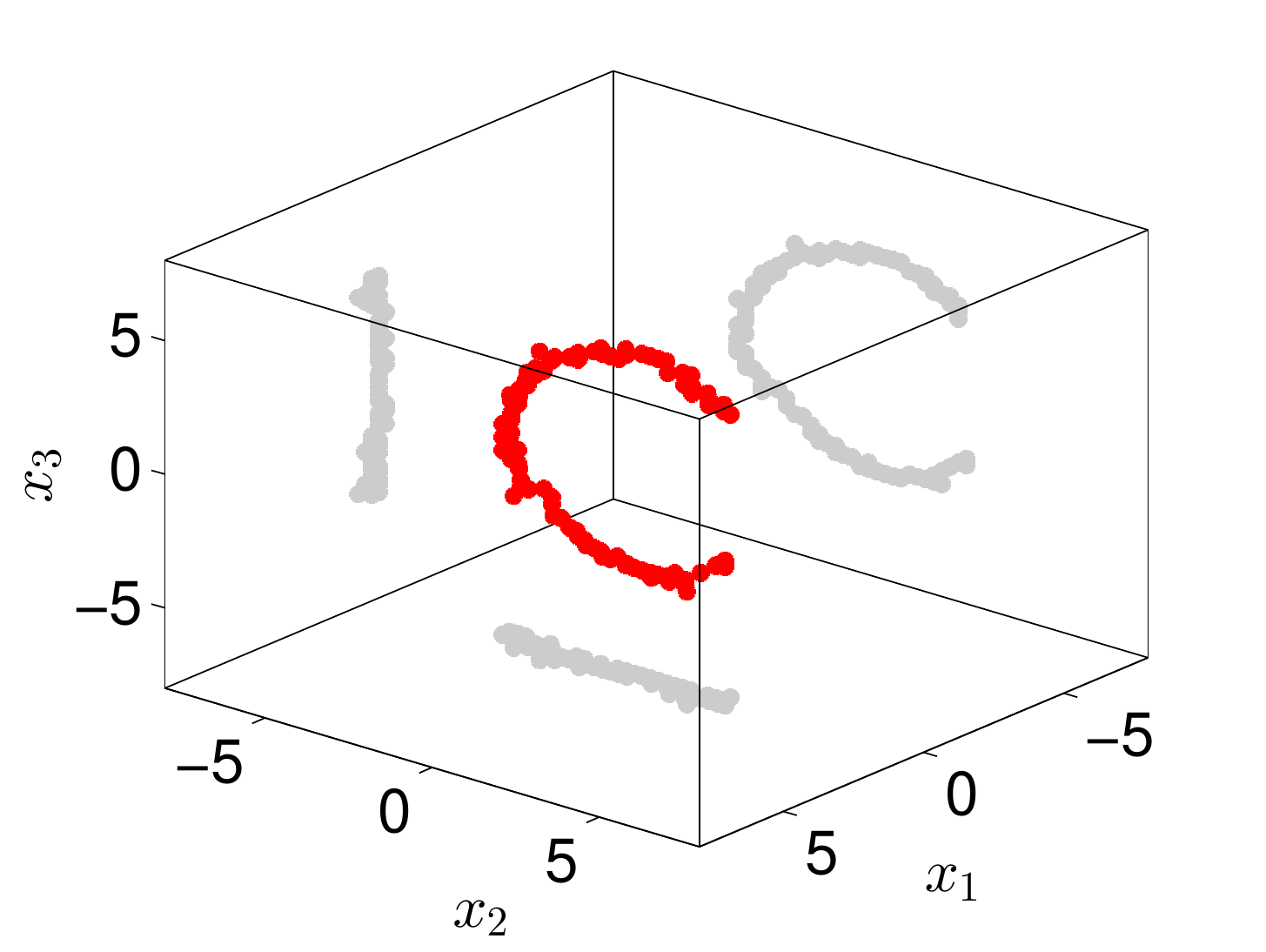}} \subfigure[]{\includegraphics[width=0.32\textwidth]{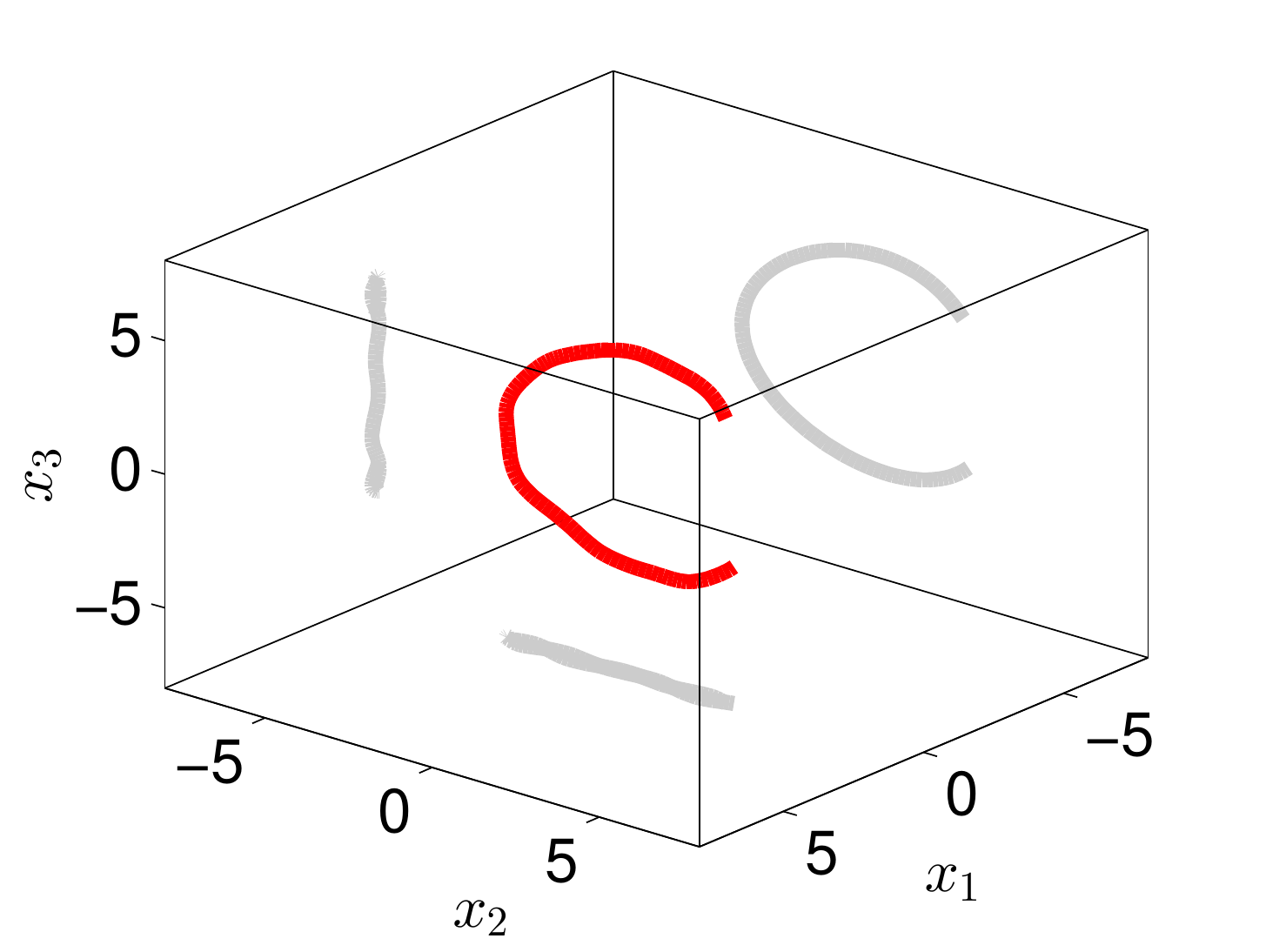}}\\
\subfigure[]{\includegraphics[width=0.32\textwidth]{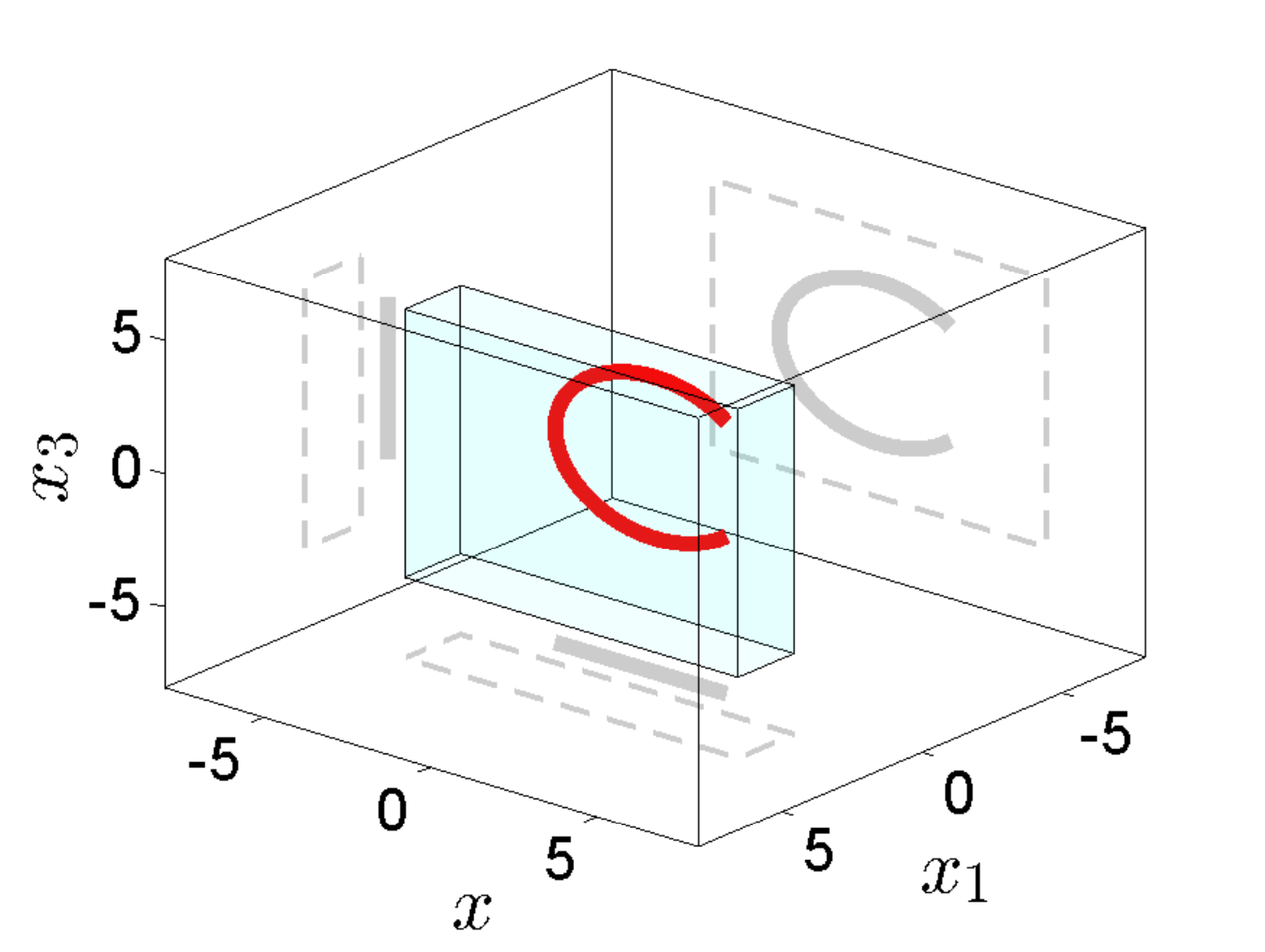}}
\subfigure[]{\includegraphics[width=0.32\textwidth]{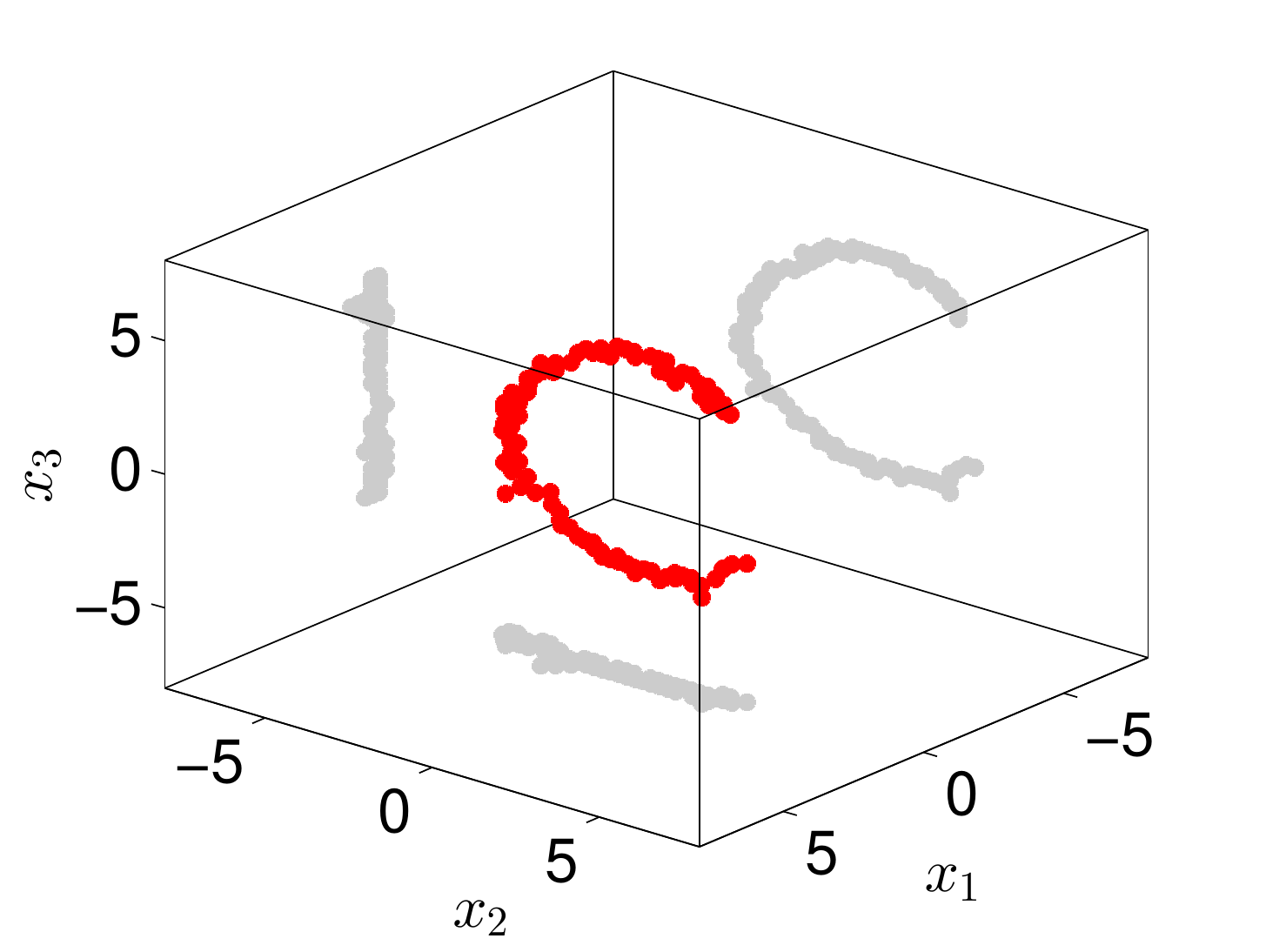}} \subfigure[]{\includegraphics[width=0.32\textwidth]{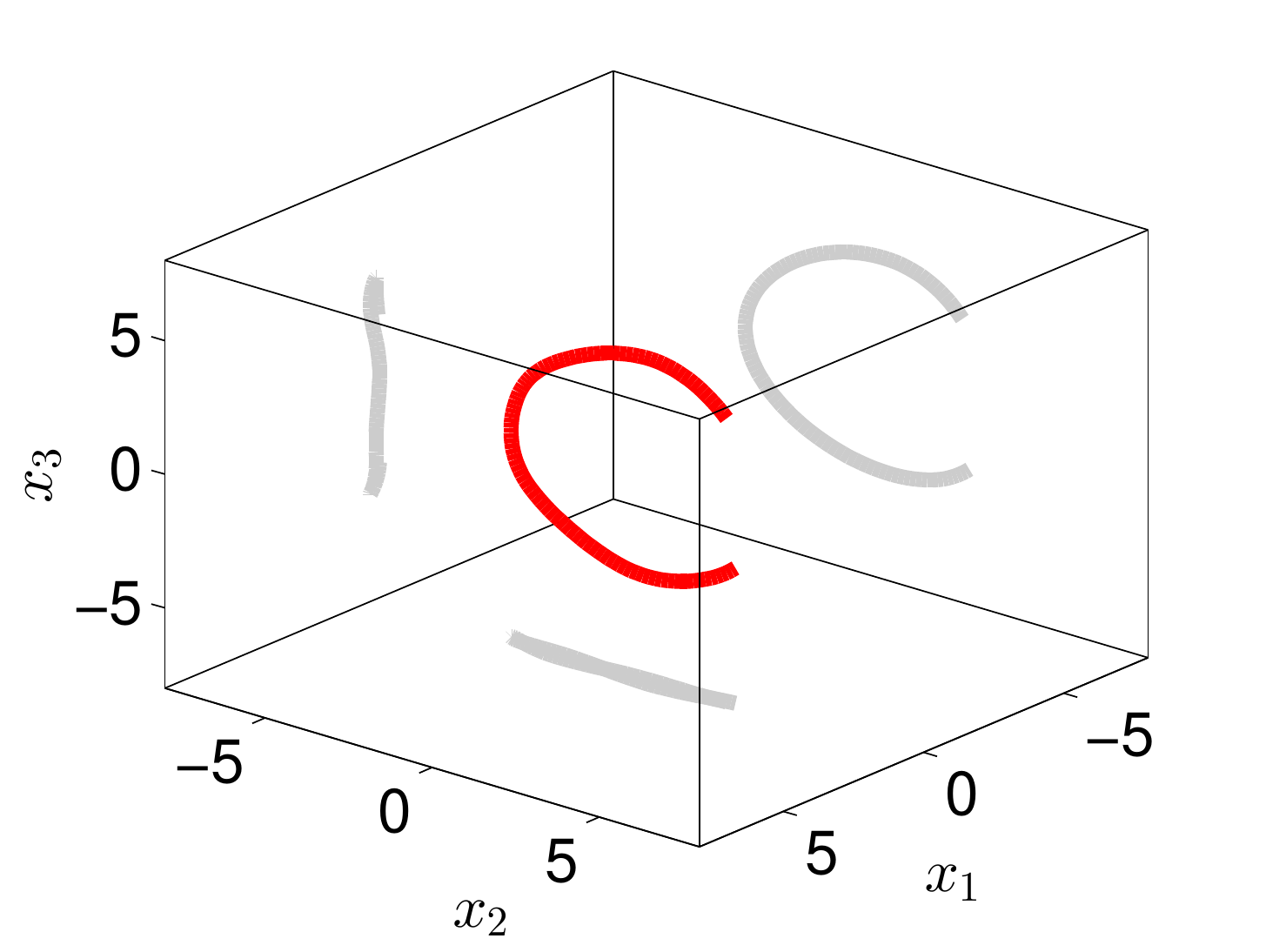}}\\
\caption{Numerical results for Example \ref{exp1}. (a), (d) and (g): true moving trajectories in Case (i), (ii) and (iii), respectively;  (b), (e) and (h): reconstruction of trajectories  in Case (i), (ii) and (iii), respectively; (c), (f) and (i): post-processed trajectories  in Case (i), (ii) and (iii), respectively. }\label{fig.3.1}
\end{figure}

\begin{example}\label{exp1}
Reconstruct the trajectory of a Latin-script letter ``C". Following  Example \ref{exp0}, synthetic wave field were collected at the terminal time $T=10$s with $100$ recording time steps.

This example investigates reconstruction of the trajectories in the respective homogeneous and inhomogeneous media. Figures \ref{fig.3.1}(b), (e) and (d) show scatter plots of trajectory points for all discrete time steps in Case (i), (ii) and (iii). It is observed that there exist some small perturbations in the reconstructed trajectory, because  $5\%$  noise was added to the measurement data. Figures \ref{fig.3.1}(c), (f) and (i) show that the truncated Fourier expansion technique in the post-processing step could yield smooth curves which are close to the exact trajectories.

These numerical results also verified that the reconstructed moving trajectory in the homogeneous medium is very close to that in the inhomogeneous medium, which is due to the fact that the wave scattering generated from the human body is nearly negligible with small angular frequency. Hence, we will only consider the  homogeneous case in the following examples.
\end{example}


\begin{figure}[htb!]
 \begin{tabular}{c }
  \hfill\subfigure[]{\includegraphics[  width=0.31\textwidth]{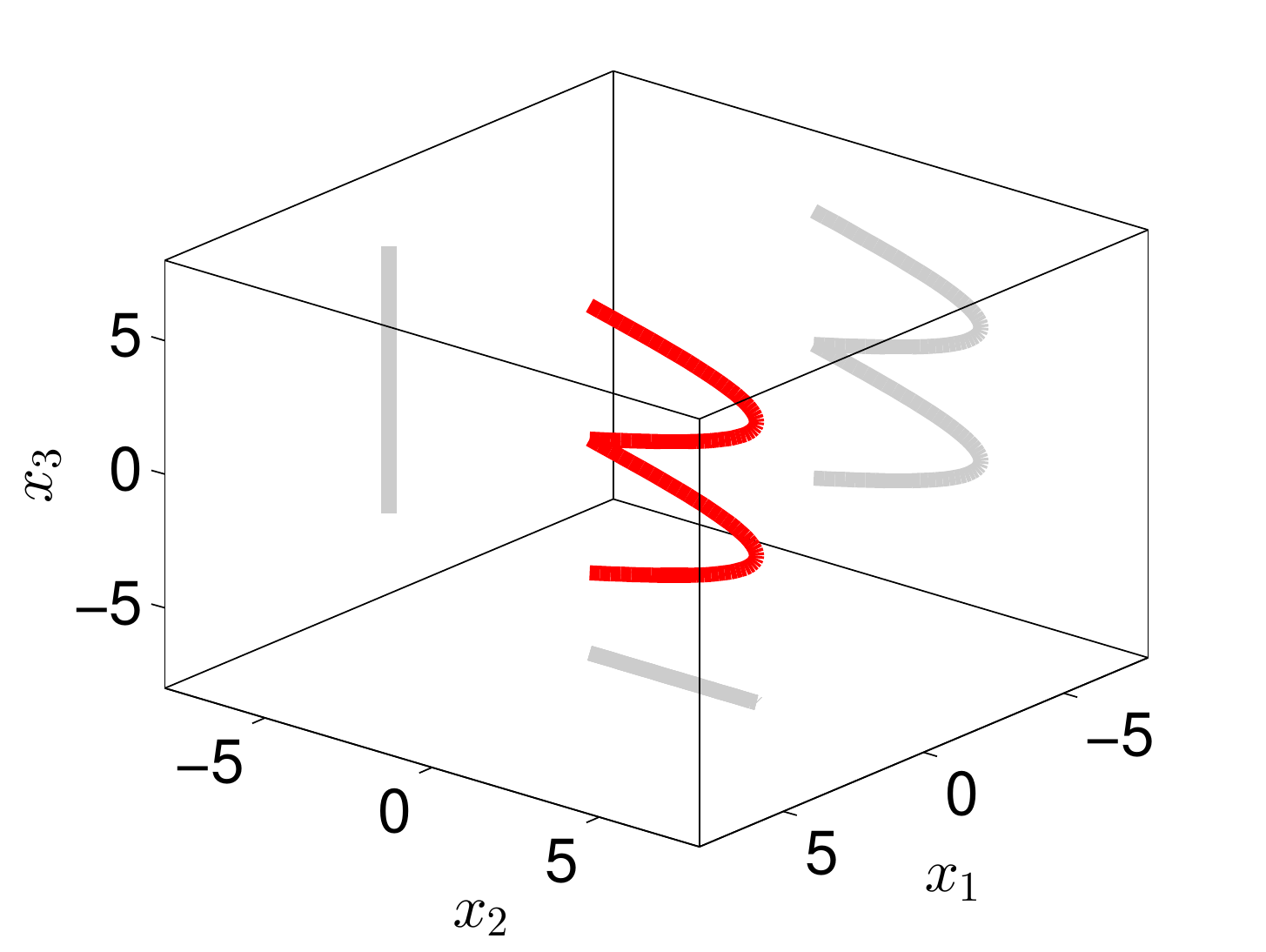}}\hfill
   \end{tabular}
   \begin{tabular}{cc}
   \hfill\subfigure[]{\includegraphics[  width=0.31\textwidth]{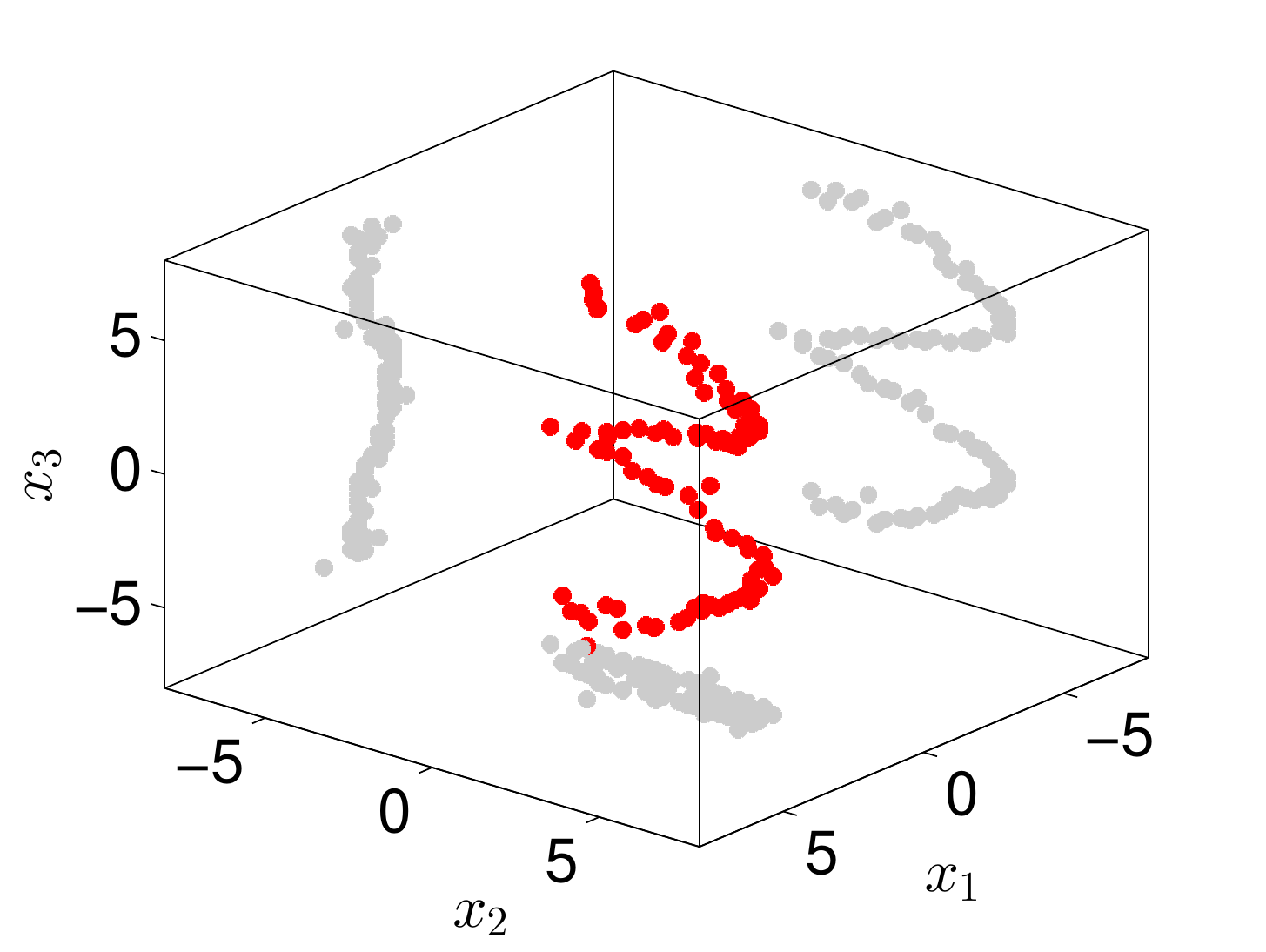}}\hfill
    \hfill\subfigure[]{\includegraphics[  width=0.31\textwidth]{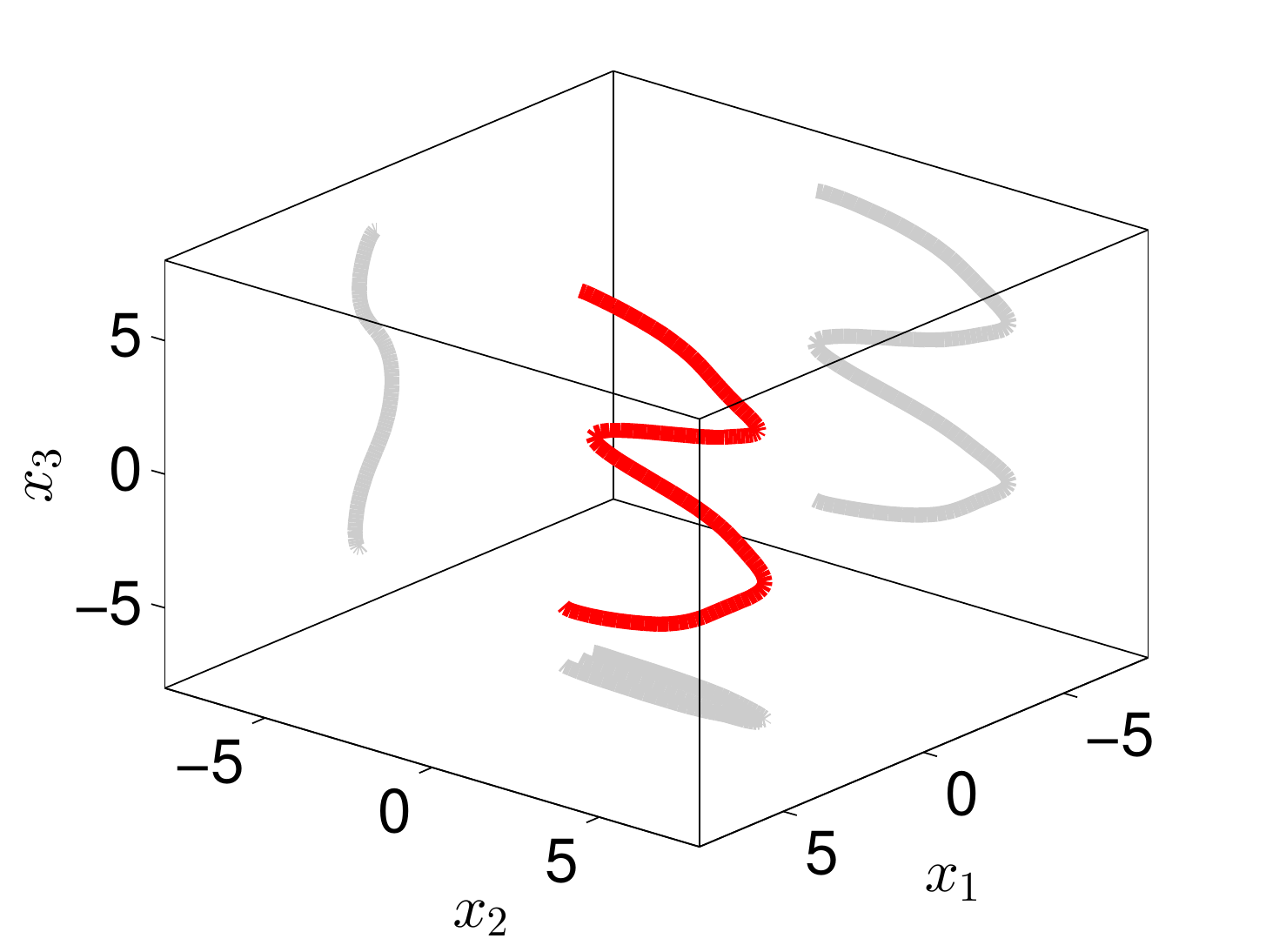}}\hfill\\
     \end{tabular}
   \begin{tabular}{c}
    \hfill\subfigure[]{\includegraphics[  width=0.31\textwidth]{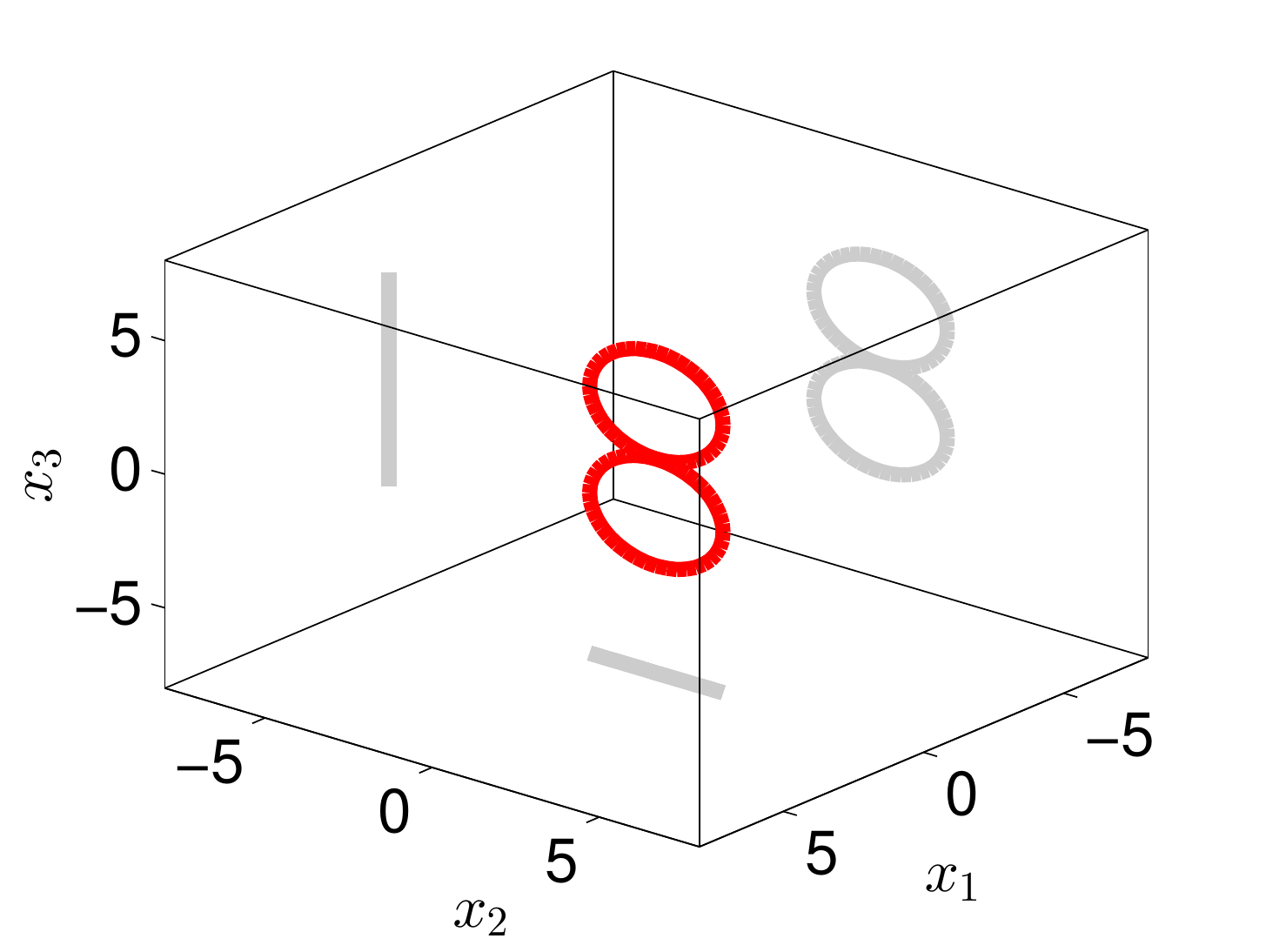}}\hfill
     \end{tabular}
     \begin{tabular}{cc}
      \hfill\subfigure[]{\includegraphics[  width=0.31\textwidth]{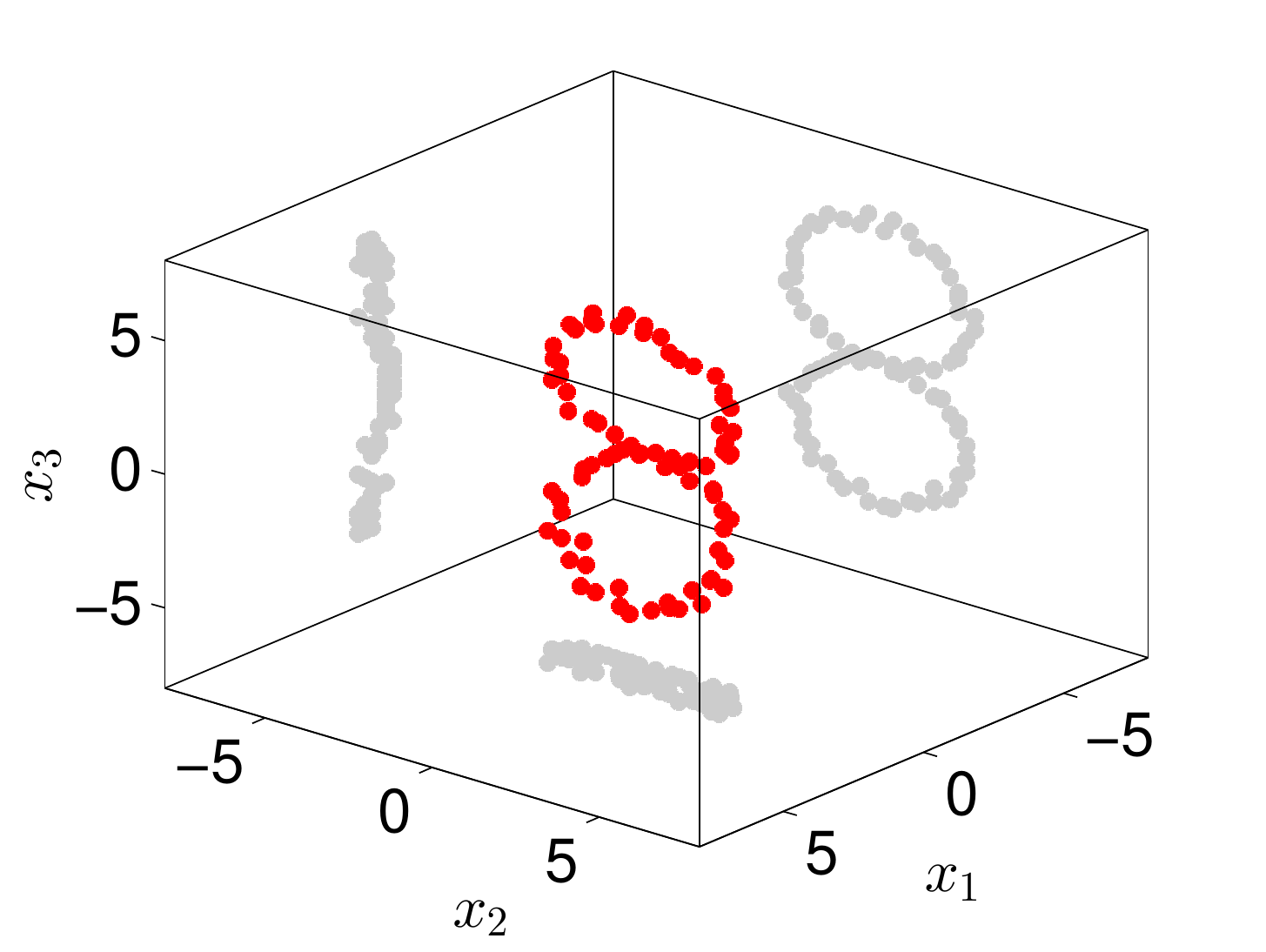}}\hfill
       \hfill\subfigure[]{\includegraphics[  width=0.31\textwidth]{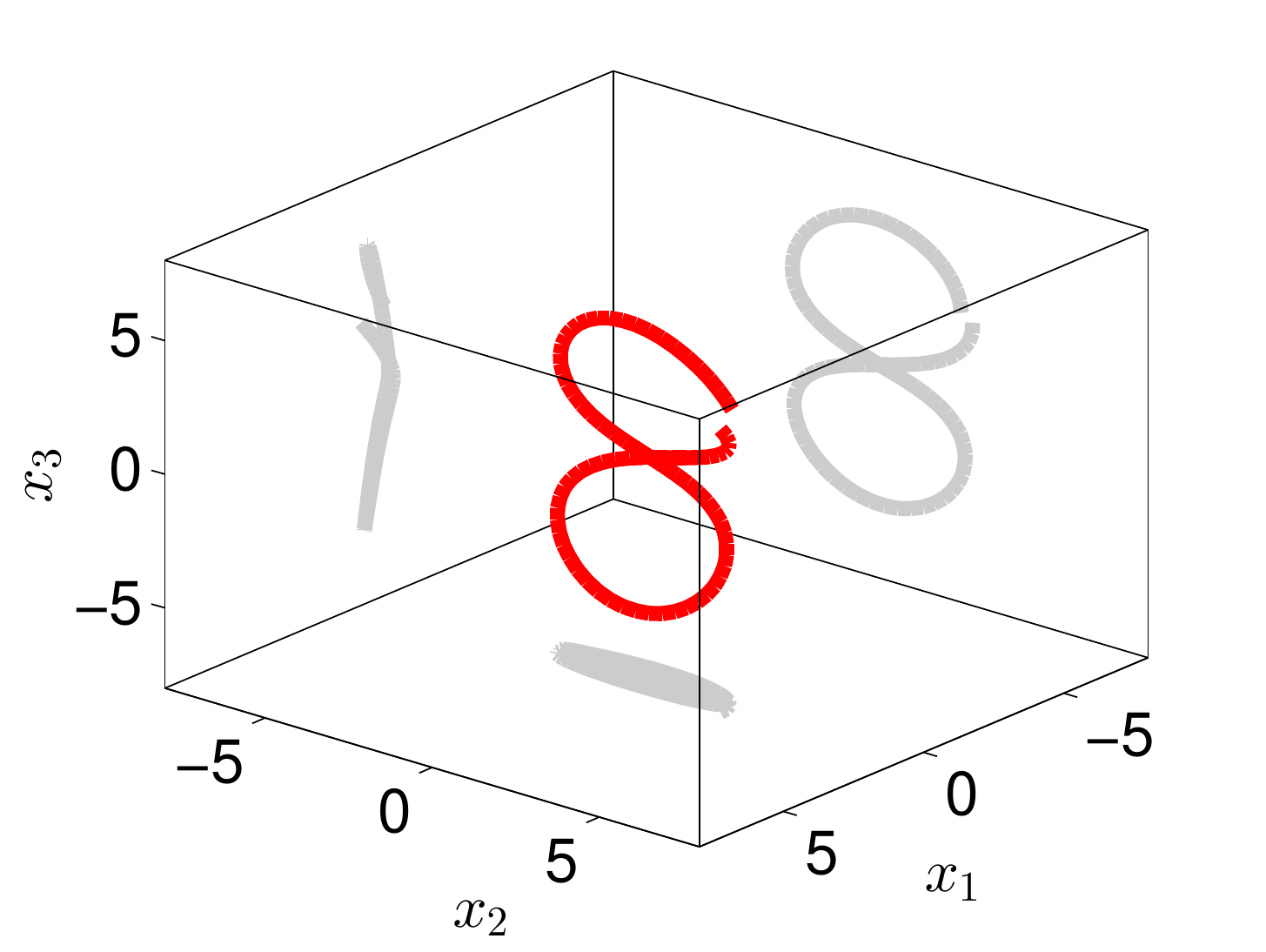}}\hfill\\
        \hfill\subfigure[]{\includegraphics[  width=0.31\textwidth]{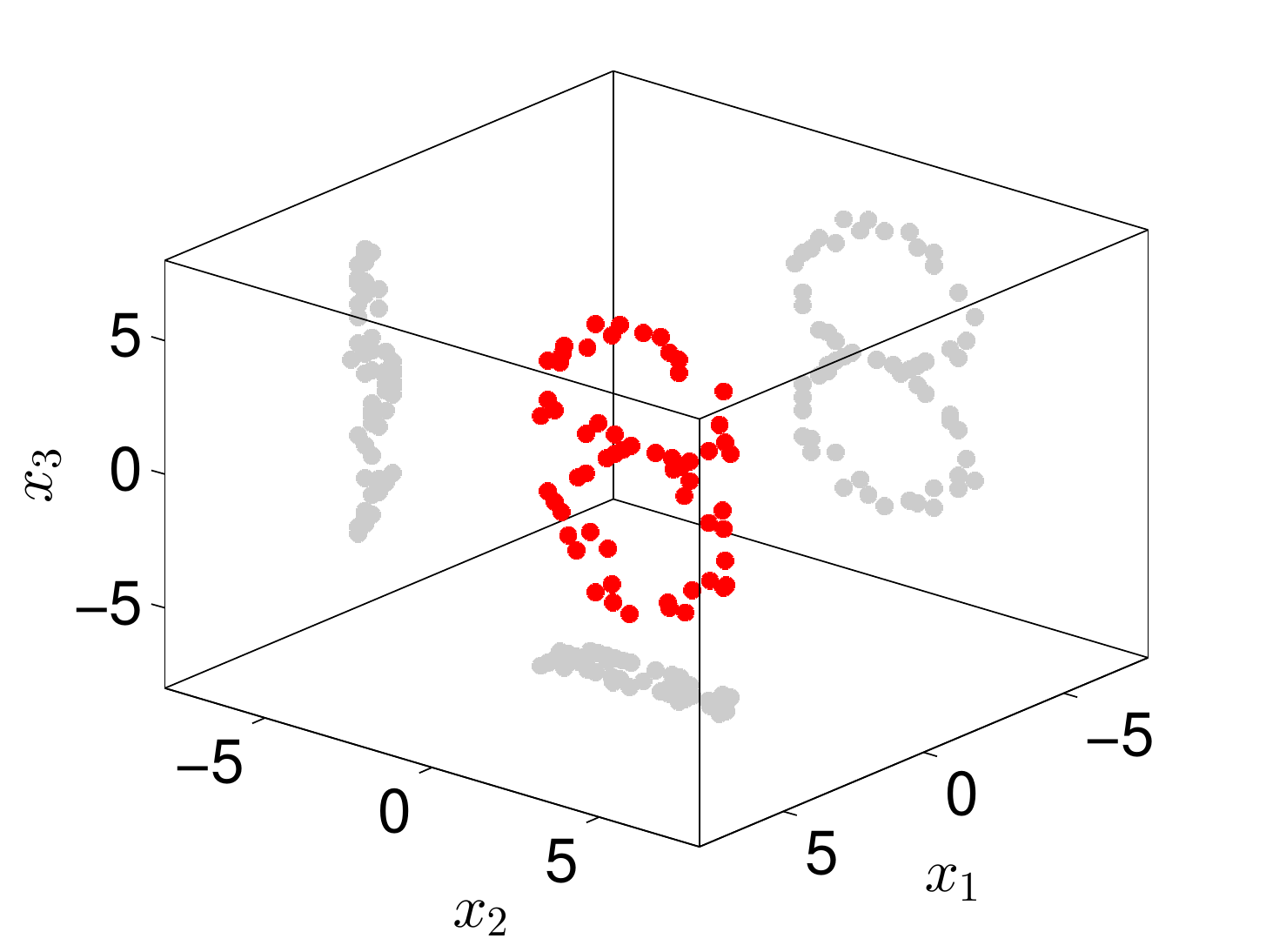}}\hfill
         \hfill\subfigure[]{\includegraphics[  width=0.31\textwidth]{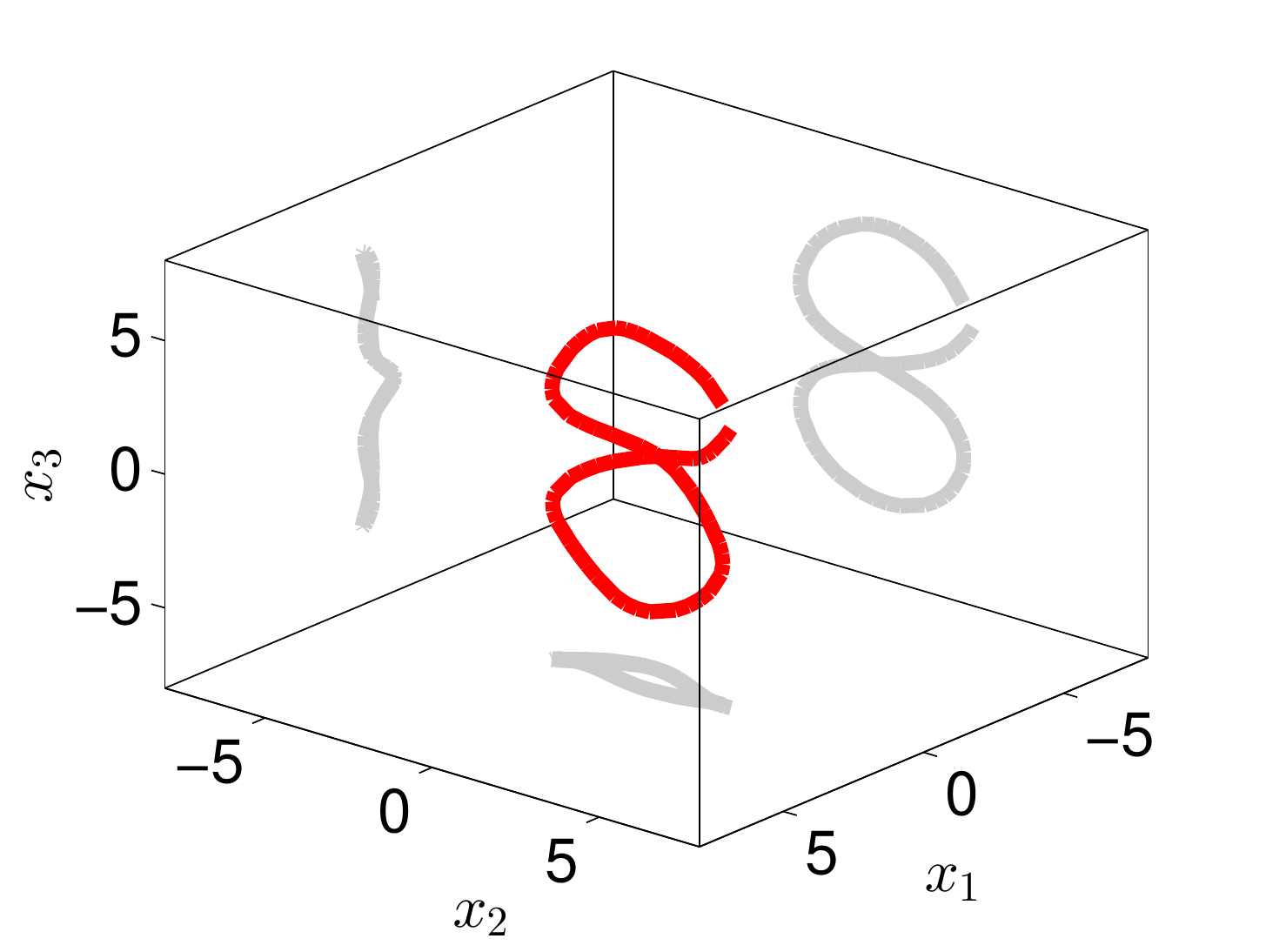}}\hfill\\
          \end{tabular}
        \caption{Reconstruction of the trajectories ``3" and ``8". Left column: true moving trajectories;   Center column: reconstruction of trajectories; Right column: post-processed  trajectories. (e): reconstruction via sequential tuning, (g): reconstruction via parallel tuning.  }\label{fig.3.2}
\end{figure}
\begin{example}\label{exp2}
Reconstruct the trajectories of Arabic numbers ``3" and ``8". Assume that the emitter moves along the following  two paths.

(i)\
\begin{equation*}
  \displaystyle{z_0(t)=\left( 0,\,5\left|\sin\frac{(-5+t)\pi}{5}\right|-2 ,\, 5-t \right)},\quad t\in (0\mathrm{s}, 10\mathrm{s}],
\end{equation*}
which is depicted in Figure \ref{fig.3.2}(a) and served as an approximation of the handwriting Arabic number ``3".

(ii)\
\begin{equation*}
  z_0(t)=\left\{
\begin{aligned}
&  \left(0,\,-2\cos \frac{(t-2)\pi}{2},\, 2 \sin\frac{(t-2)\pi}{2}-2 \right),\quad t\in (0\mathrm{s},3\mathrm{s}]\cup(7\mathrm{s},8\mathrm{s}],\\
&    \left(0,\, 2\cos \frac{\pi t}{2},\,2 \sin\frac{\pi t}{2}+2 \right),\quad t\in (3\mathrm{s},7\mathrm{s}],\\
\end{aligned}
\right.
\end{equation*}
which is depicted in Figure \ref{fig.3.2}(d) and served as an approximation of the handwriting Arabic number ``8".

Note that the trajectory of Arabic number ``3" contains a non-differentiable point such that a sharp corner lies in its neighborhood. In contrast to the smooth parts of the trajectory, the local details such as sharp corners are usually difficult to be reconstructed. The top row of Figure \ref{fig.3.2}  illustrates that our method has the capability of recovering the details of sharp corners, as long as the recording time steps are very small and the local sampling grids are sufficiently fine.

The example of Arabic number ``8" aims to test the capability of our method in resolving a closed trajectory which contains an  intersection point. It can be seen from the last two rows of Figure \ref{fig.3.2} that the intersection point on the trajectory is well identified. For comparison, the reconstructions via the sequential and parallel tuning techniques are shown in Figure \ref{fig.3.2}(e) and (g), respectively. As shown in Figure 5, both of the speed-up techniques could produce satisfactory reconstructions.
\end{example}


\begin{figure}
  \hfill\subfigure[]{\includegraphics[  width=0.3\textwidth]{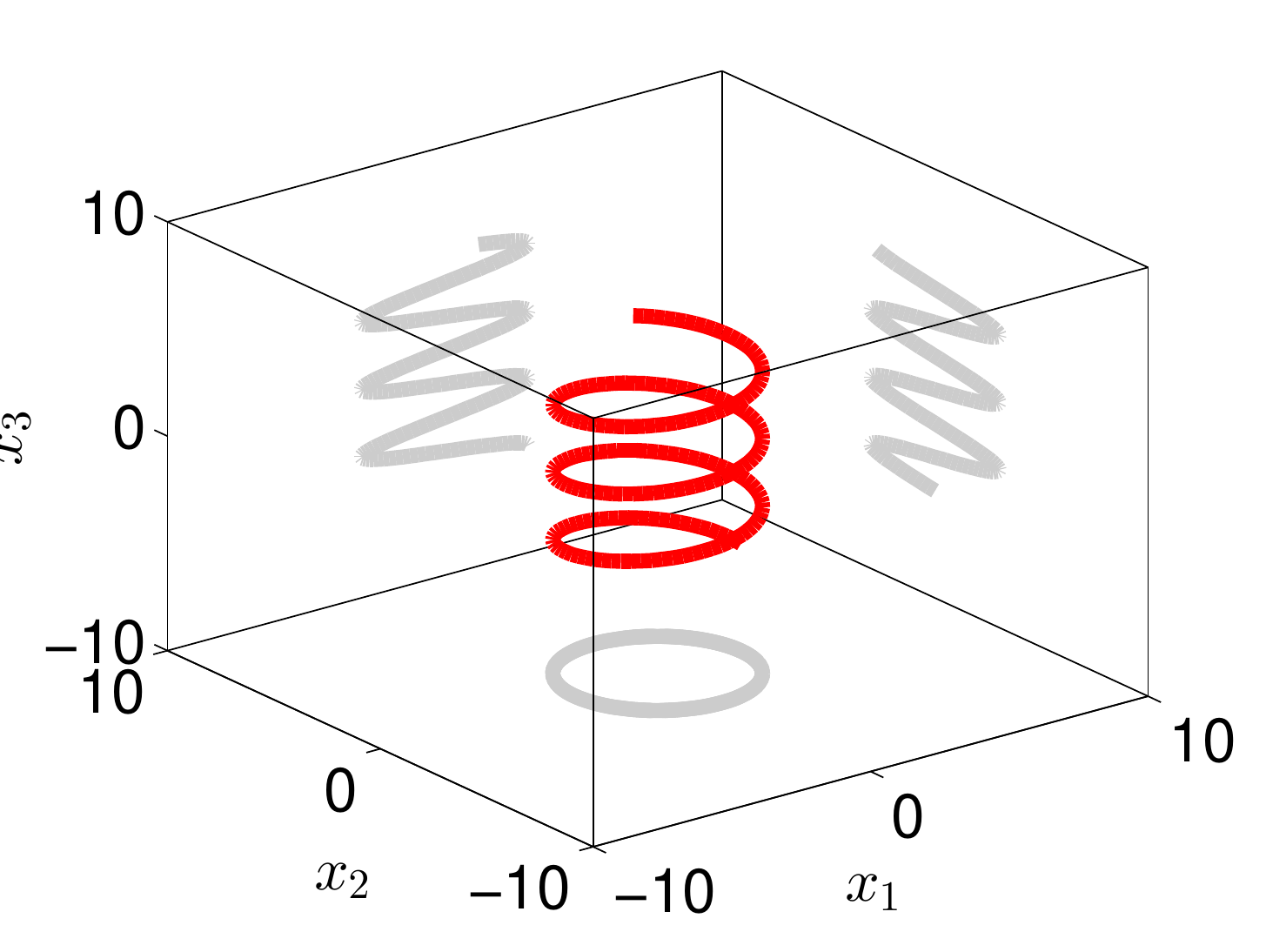}}\hfill
   \hfill\subfigure[]{\includegraphics[  width=0.3\textwidth]{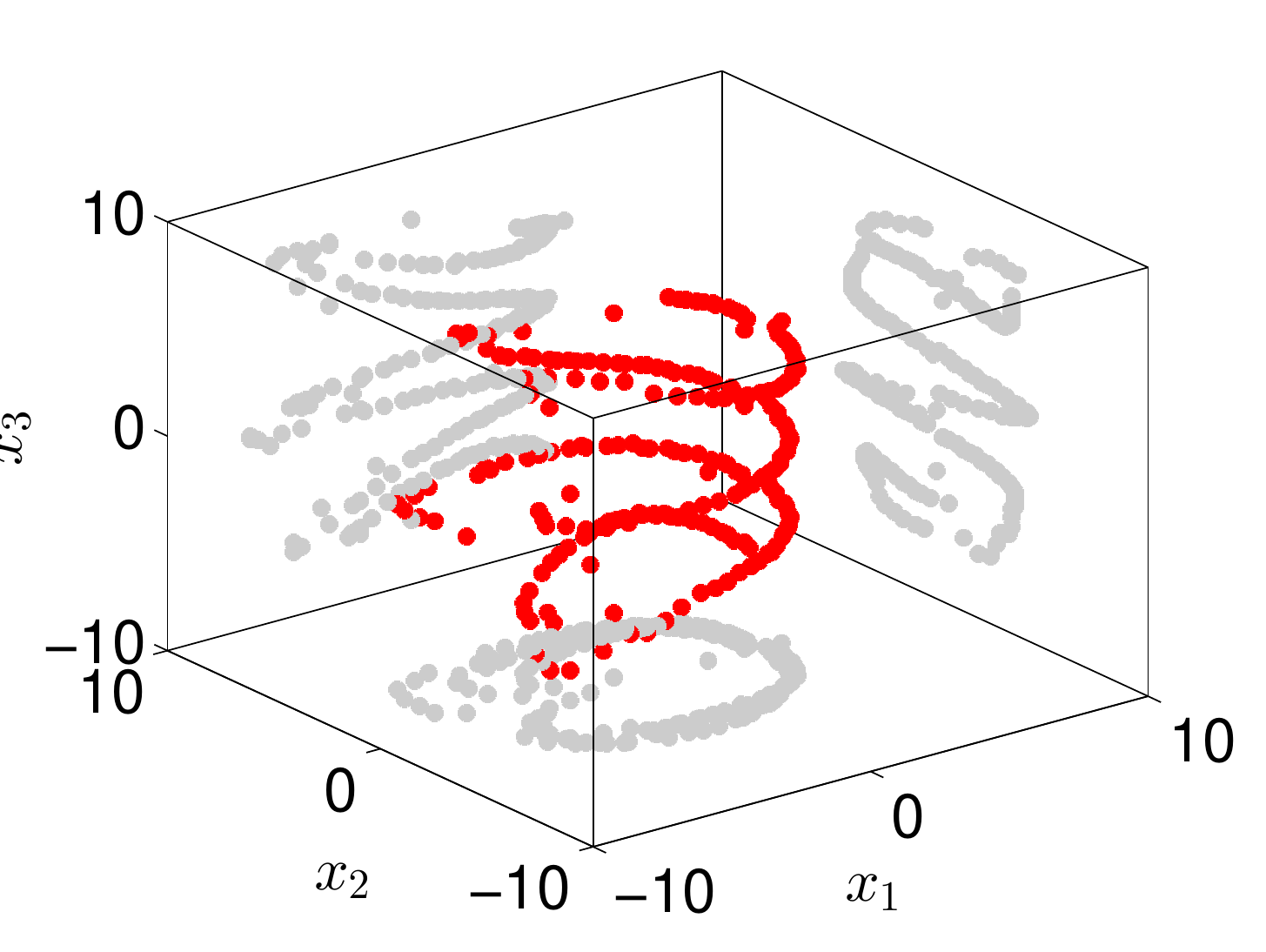}}\hfill
    \hfill\subfigure[]{\includegraphics[  width=0.3\textwidth]{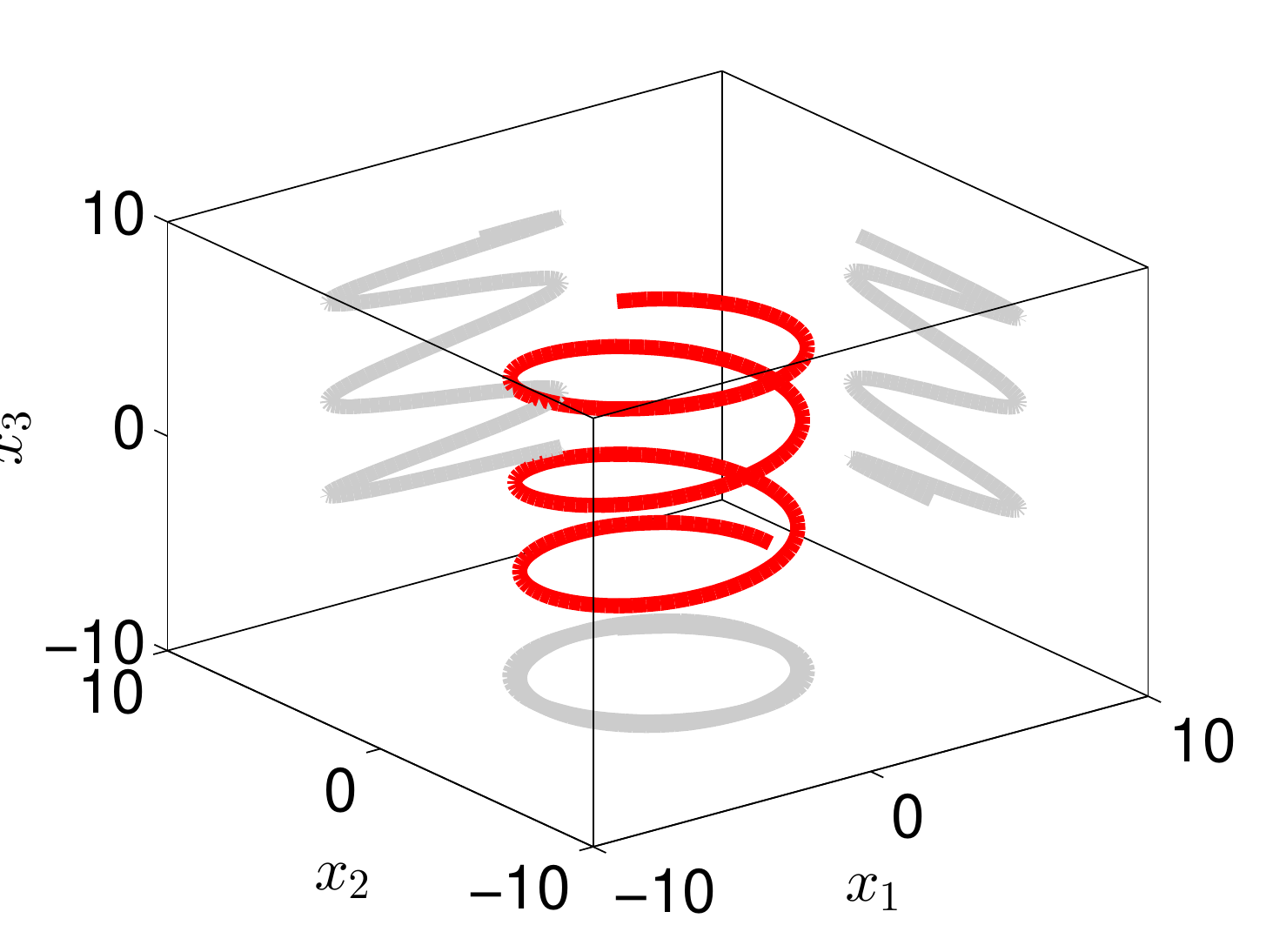}}\hfill\\
      \hfill\subfigure[]{\includegraphics[  width=0.3\textwidth]{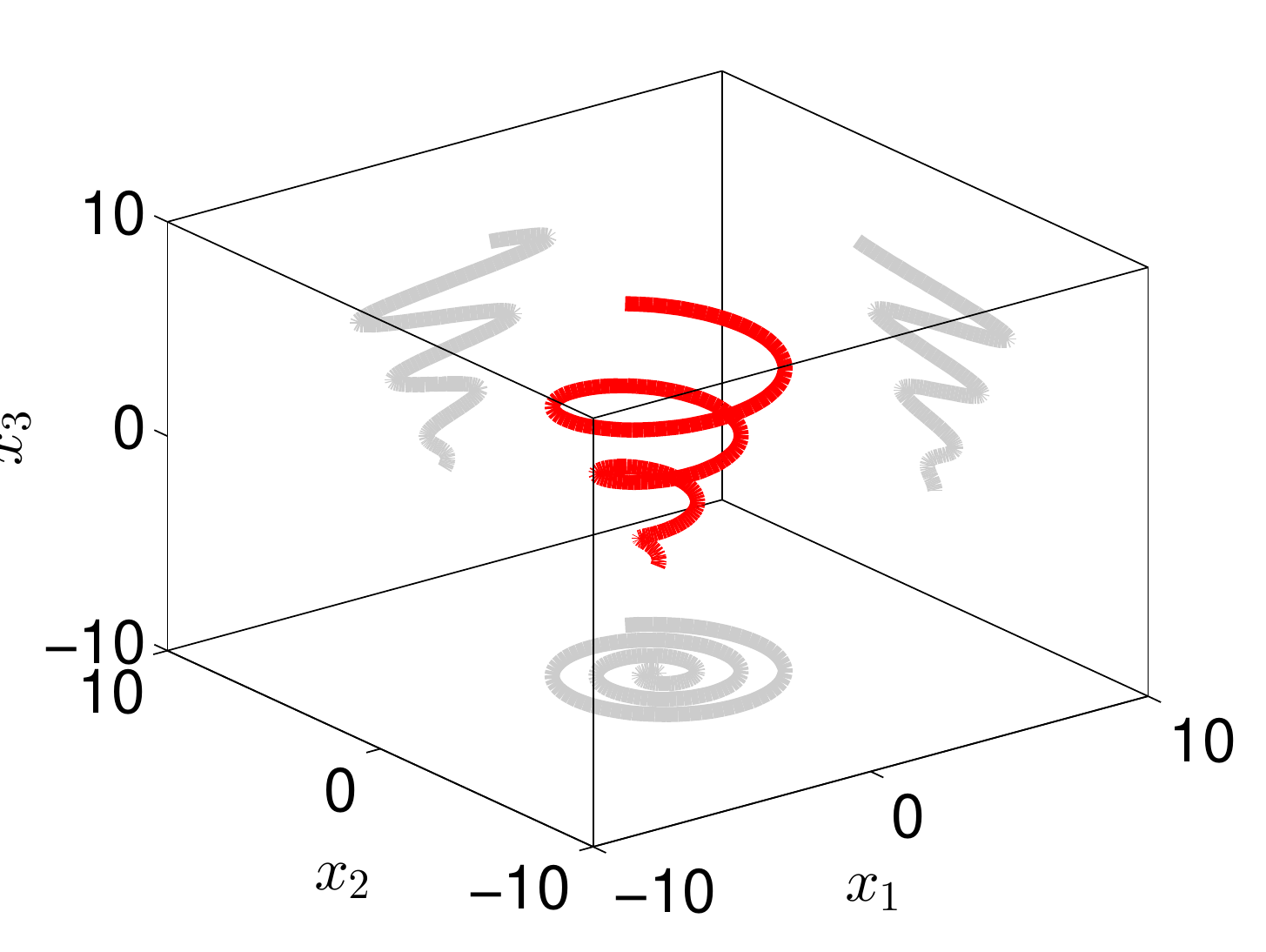}}\hfill
       \hfill\subfigure[]{\includegraphics[  width=0.3\textwidth]{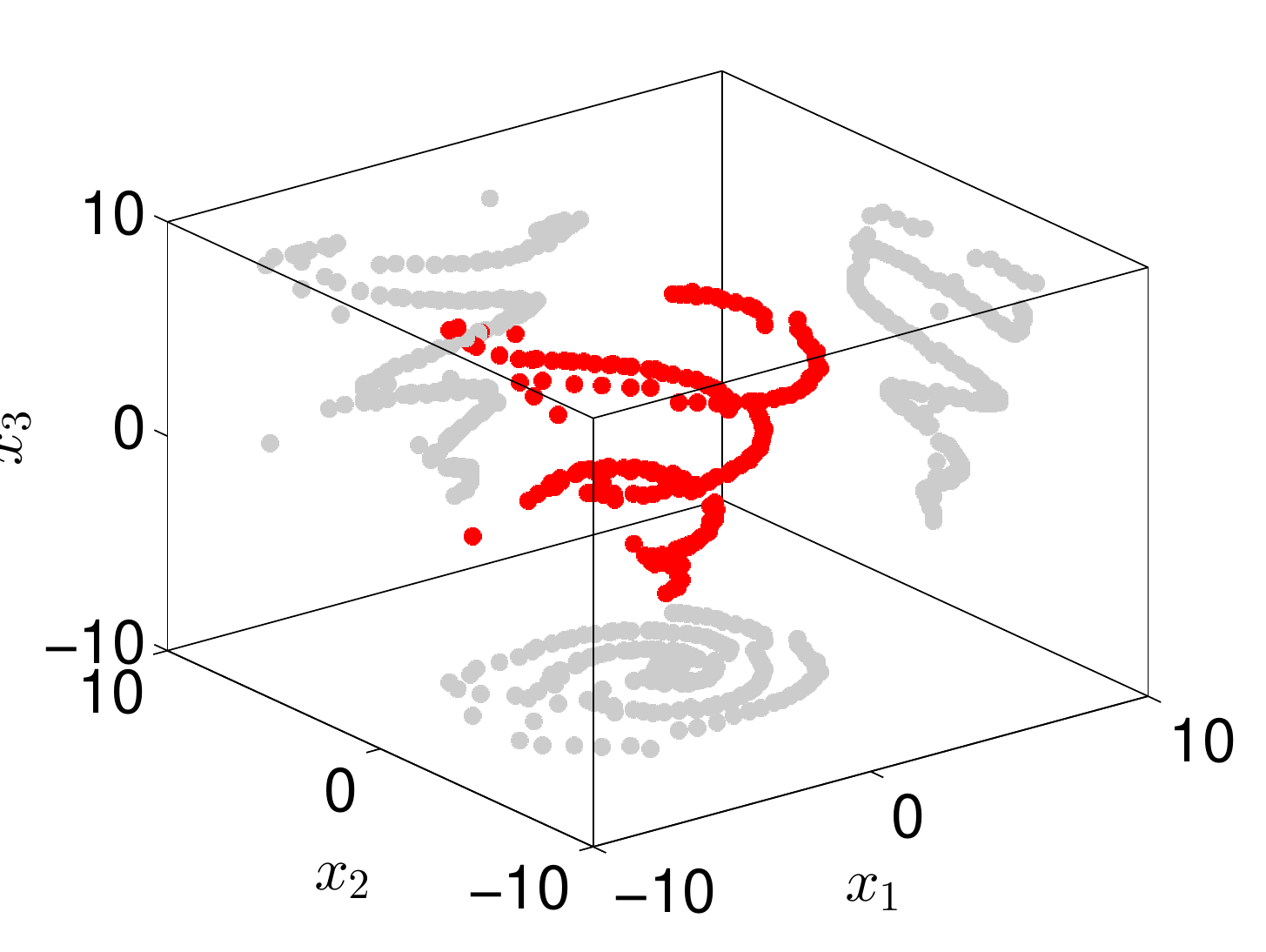}}\hfill
        \hfill\subfigure[]{\includegraphics[  width=0.3\textwidth]{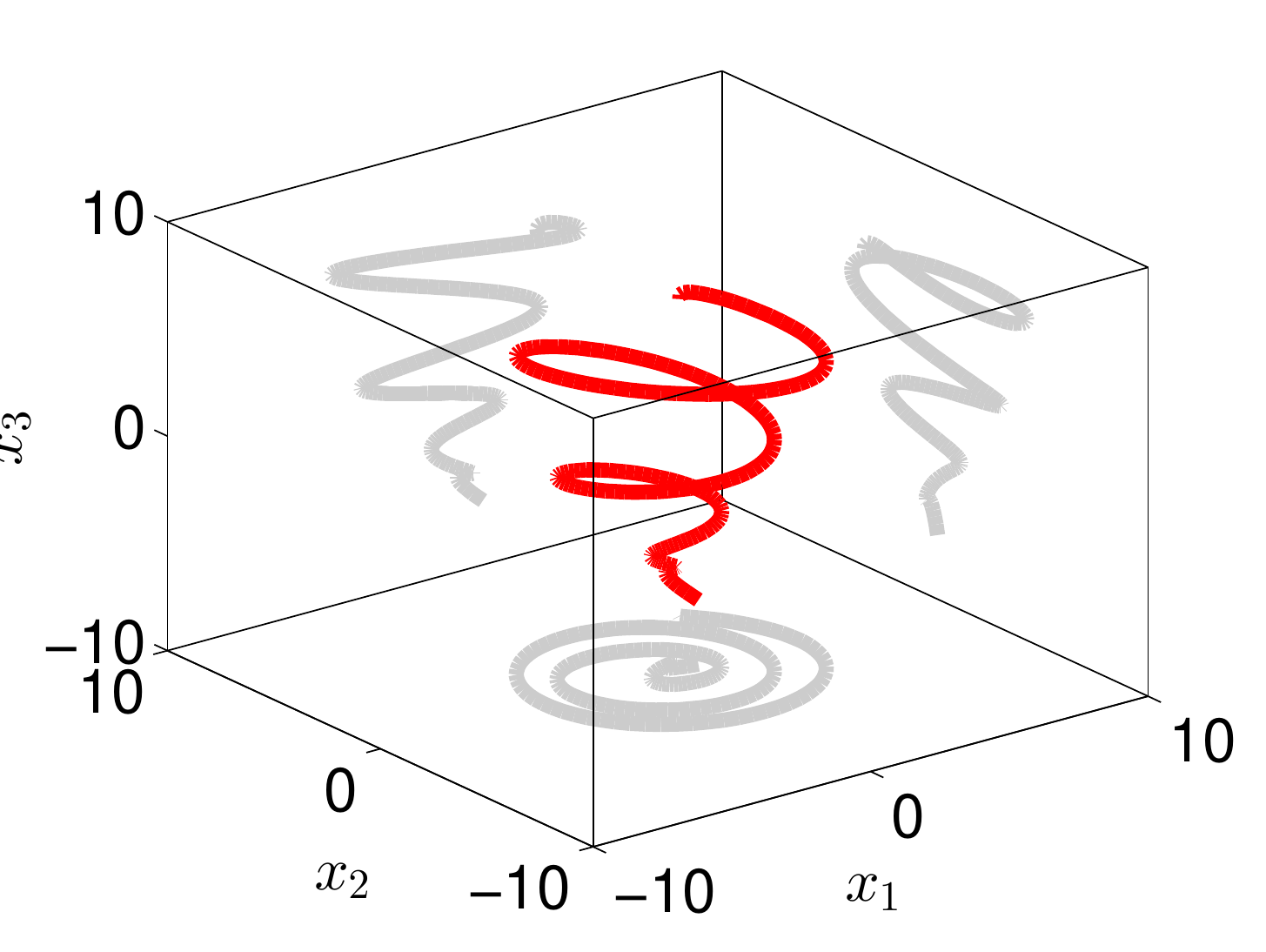}}\hfill\\
   \caption{Reconstruction of spiral-shaped trajectories. Left column: true moving trajectories;  Center column: reconstruction of trajectories; Right column: post-processed  trajectories; Top row: the cylindrical spiral; Bottom row: the conical spiral.}\label{fig.3.3}
\end{figure}

\begin{example}\label{exp3}
Reconstruct a cylindrical/conical spiral. In the previous two examples, we only study the moving trajectories in $x_2$-$x_3$ plane. This example is intended to demonstrate the performance of the algorithm for reconstructing some complicated 3D trajectories. The following two  cases are investigated.

(i)\  Cylindrical spiral: $ z_0(t)=( 3 \cos t, 3\sin t, 0.5\, t-5 ),\, t\in (0\mathrm{s}, \, 20\mathrm{s}].$

(ii)\ Conical spiral: $ z_0(t)=(0.2 \,t \cos t, 0.2 \,t \sin t, 0.5\, t-5),\, t\in (0\mathrm{s},\,  20\mathrm{s}].$

The numerical results are shown in Figure \ref{fig.3.3}, where we post-process the reconstructed cylindrical spiral and conical spiral by truncated Fourier expansion of order 1 and 5, respectively.
As seen in Figure \ref{fig.3.3}, the reconstructions have relatively large perturbations in the half space $x_1<0$. The reason for this behavior is probably that only the limited aperture measurements are available and all the receivers are located in the half space $x_1>0$.

\end{example}



\begin{figure}[htb!]
\centering
  \hfill\subfigure[]{\includegraphics[  width=0.45\textwidth]{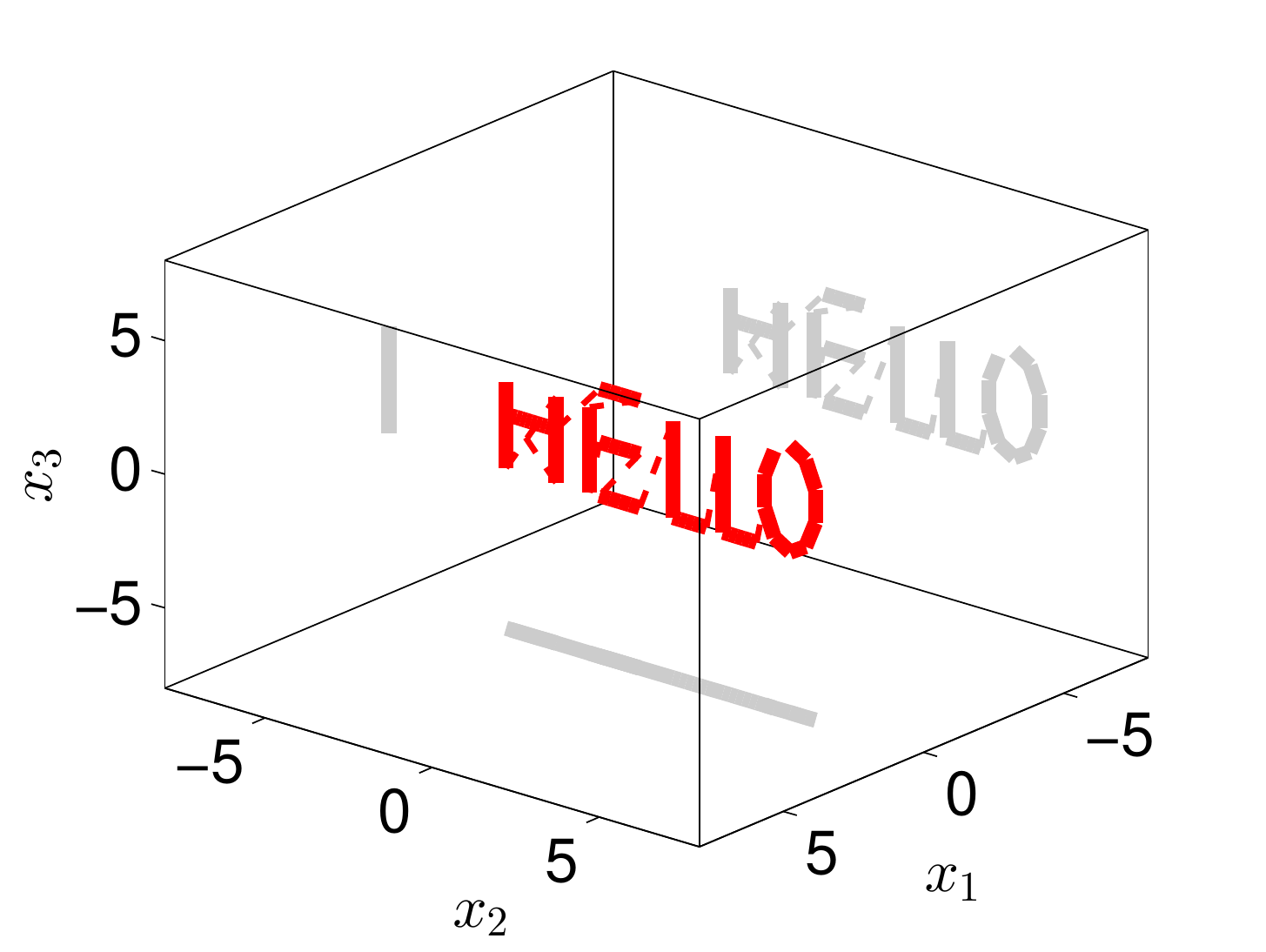}}\hfill
   \hfill\subfigure[]{\includegraphics[  width=0.45\textwidth]{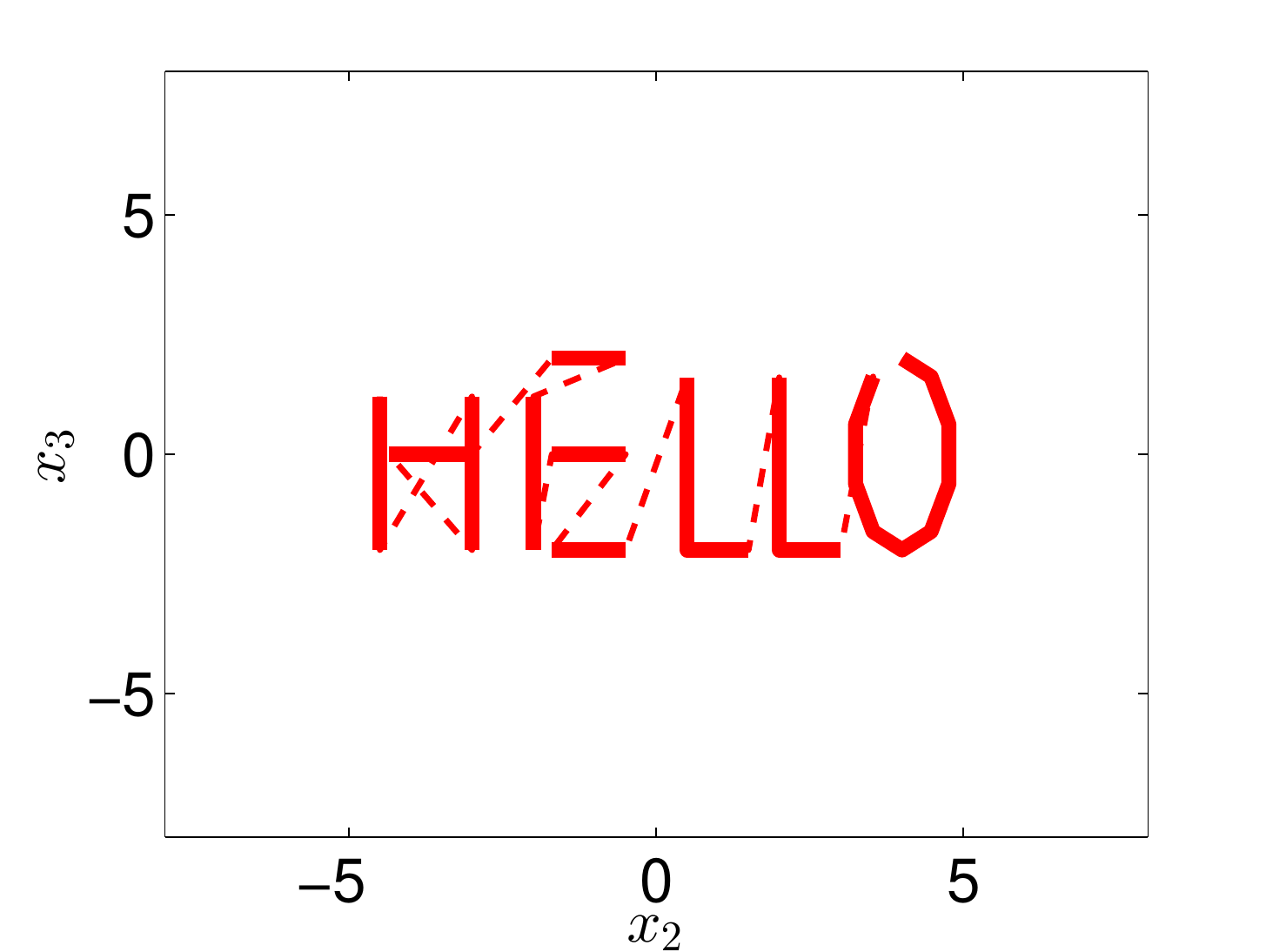}}\hfill\\
     \hfill\subfigure[]{\includegraphics[  width=0.45\textwidth]{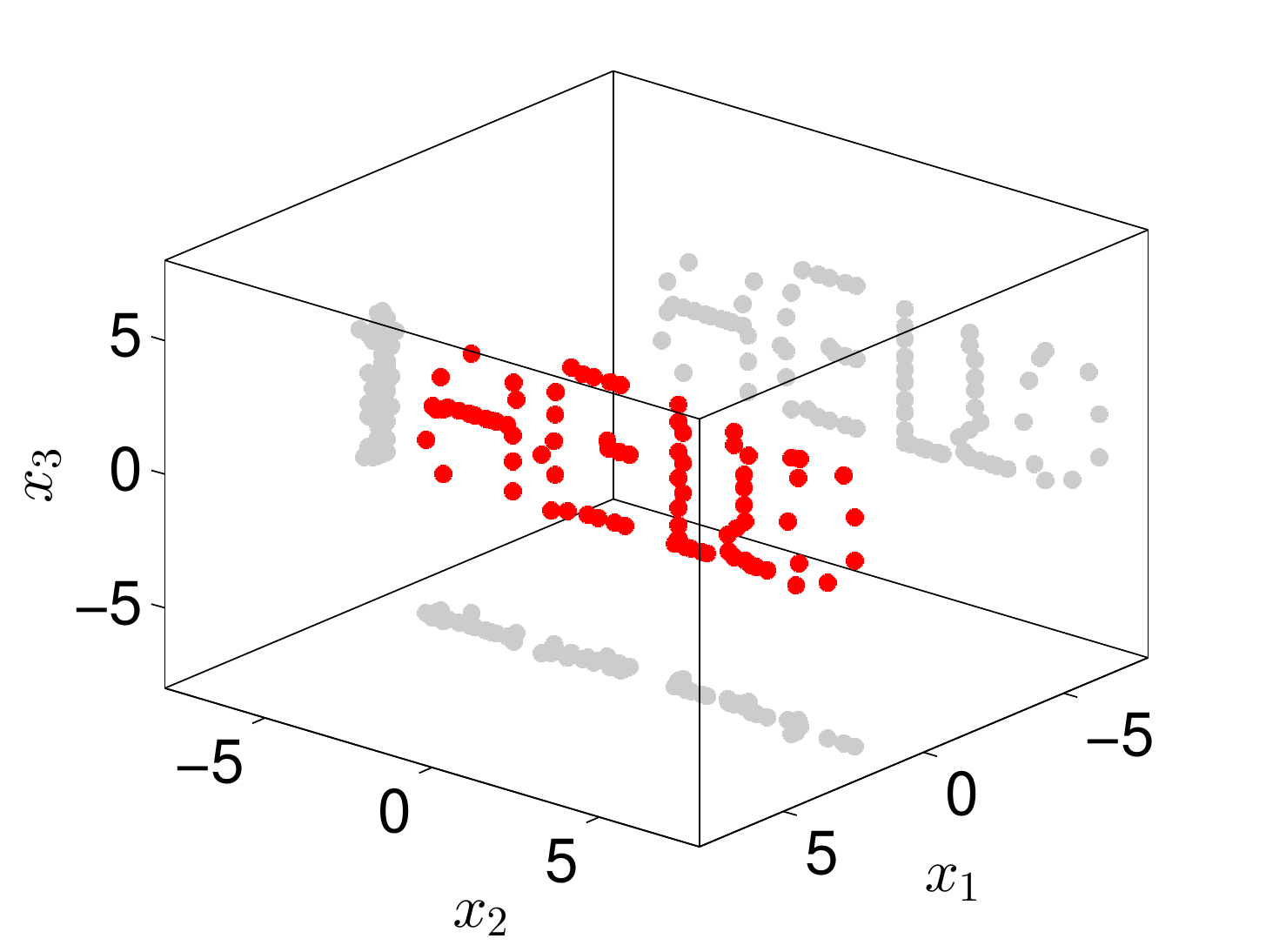}}\hfill
      \hfill\subfigure[]{\includegraphics[  width=0.45\textwidth]{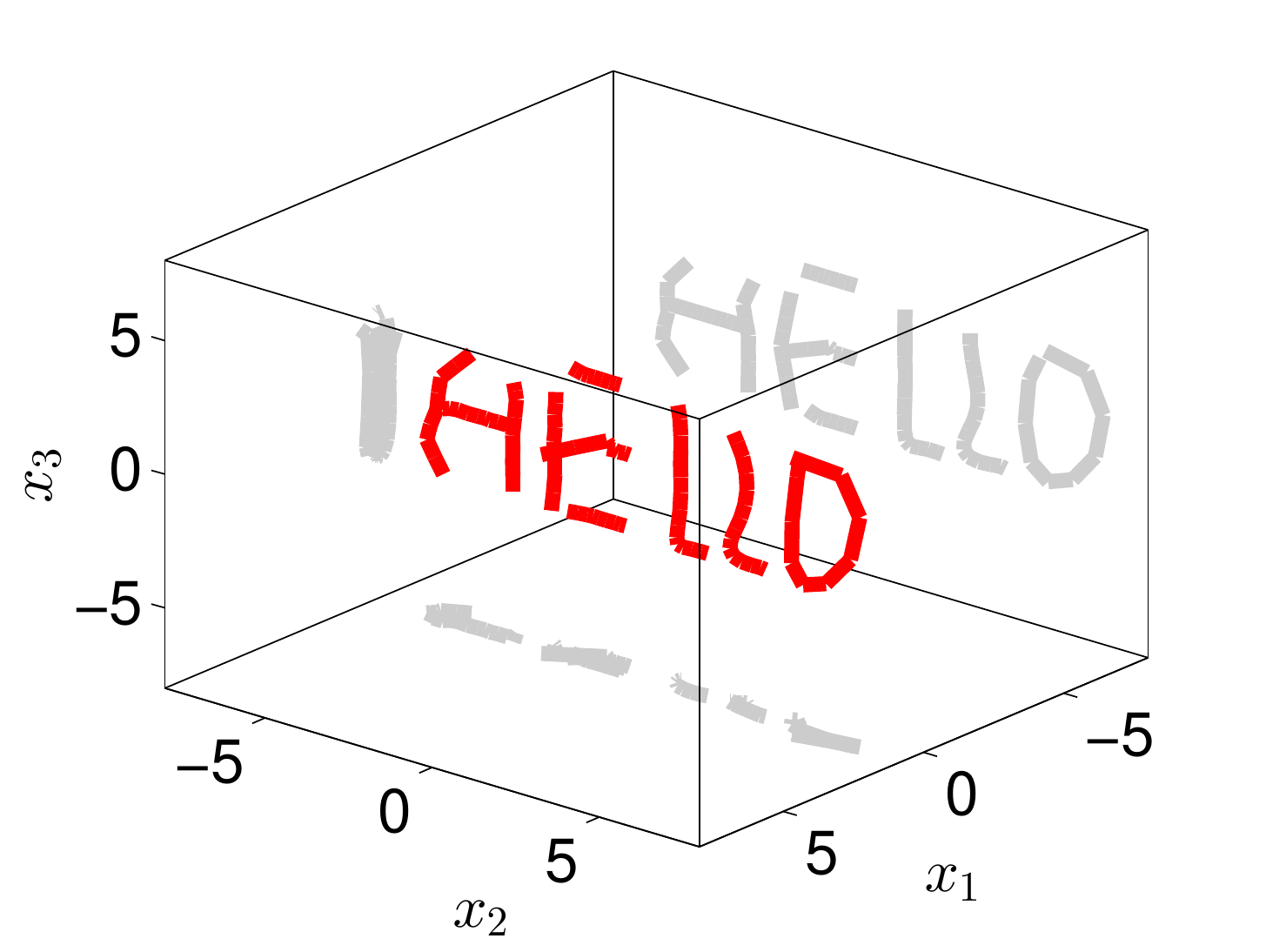}}\hfill
   \caption{Reconstruct the trajectory of a text. (a) (b): true moving trajectory in 3D and 2D view, respectively (the speed of the emitter are $8$m/s and $80$m/s along the solid and dashed lines, respectively), (c): reconstruction of trajectory (d): post-processed trajectory. } \label{fig.3.4}
\end{figure}

\begin{example}\label{exp4}
The last example is devoted to the identification of a motion trajectory consisting of several adjacent Latin-script letters. Assume that someone is wearing an emitter on one of his/her finger and moving the finger to input a text to the computer. Here the motion trajectory is depicted in Figure \ref{fig.3.4}(a) and (b) and served as an approximation of the handwriting ``HELLO''. We want to emphasize that the moving trajectory is continuous although the text ``HELLO" is discontinuous, see Figure \ref{fig.3.4}(b).  The terminal time $T=8$s was employed and $30\%$ noise was added to the synthetic forward data.

It is challenging to distinguish these adjacent letters with large noise. Figure \ref{fig.3.4}(c) shows that our method is insensitive to the noise. Moreover, it seems that the reconstructed points are automatically clustered into several groups and some of the connecting points between these groups are ``missing''. This clustering effect is due to the fact that the velocity of the emitter increased drastically when it travelled between these letters and hence little information was detected. Therefore, it would be more reasonable to
post-process these piecewise clustered points separately rather than output a single smooth moving trajectory. To this end, we first locate the gaps by checking the indices $j$ such that $\|z_j-z_{j+1}\|$ is much larger than the average distance between the successive points for all $\{z_j\}$. According to the location of these gaps, we divide $\{z_j\}$ into several sets. Then the Fourier expansion based post-processing is applied to each set of points separately. Finally, the collection of all the post-processed trajectories is plotted as the result, see Figure \ref{fig.3.4}(d) for the final reconstruction.

\end{example}


\section*{Acknowledgments}

The work of Y. Guo was supported by the NSF grants of China (No.\,41474102 and No.\,11601107). The work of J. Li was supported by the NSF grant of China  (No.\,11571161), the Shenzhen Sci-Tech Fund (No.\,JCYJ20160530184212170) and the SUSTech Startup fund.  The work of H. Liu was supported by the FRG grants from Hong Kong Baptist University, Hong Kong RGC General Research Funds (12302415 and 405513) and the NSF grant of China (No.\,11371115).

\end{document}